\newtheorem{theorem}{Theorem}
\newtheorem{corollary}{Corollary}
\newtheorem{lemma}{Lemma}
\theoremstyle{remark}
\newtheorem{remark}{Remark}
\theoremstyle{definition}
\newtheorem{define}{Definition}
\newtheorem{example}{Example}
\newtheorem*{problem}{Problem}
\newtheorem{claim}{Claim}
\newtheorem*{Acknowlegement}{Acknowlegement}
\newcommand{\CC}{\mathbb C}
\begin{document}

\title[Tangential holomorphic vector fields]{On the existence of tangential holomorphic
vector fields vanishing at an infinite type point}

\author{Ninh Van Thu}

\thanks{ The research of the author was supported in part by an NRF grant 2011-0030044 (SRC-GAIA) of the Ministry of Education, The Republic of Korea.}

\address{Center for Geometry and its Applications,
 Pohang University of Science and Technology,  Pohang 790-784, The Republic of Korea}
\email{thunv@postech.ac.kr, thunv@vnu.edu.vn}

\subjclass[2000]{Primary 32M05; Secondary 32H02, 32H50, 32T25.}
\keywords{Holomorphic vector field, real hypersurface, infinite type point.}
\begin{abstract}
The purpose of this article is to investigate the
holomorphic vector fields tangent to a real hypersurface in $\mathbb C^2$ vanishing at an infinite type point.
\end{abstract}
\maketitle

\section{Introduction}

A {\it holomorphic vector field} in $\mathbb C^n$ takes the form
$$
H = \sum_{k=1}^n h_k (z) \frac{\partial}{\partial z_k}
$$
for some functions $h_1, \ldots, h_n$ holomorphic in $z=(z_1, \ldots, z_n)$.  A smooth real
hypersurface germ $M$ (of real codimension 1) at $p$ in $\mathbb C^n$ takes a defining
function, say $\rho$, such that $M$ is represented by the equation $\rho(z)=0$.  The holomorphic vector field $H$ is said to be {\it tangent} to $M$ if its real part $\hbox{Re }H$ is tangent to $M$, i.e., $H$ satisfies the equation 
\begin{equation}\label{maineq} 
(\hbox{Re }H)\rho(z) = 0~\text{for all}~z\in M.
\end{equation}
We denote by $\mathrm{hol}_0(M,p)$ the vector space of all holomorphic vector fields tangent to $M$ and vanishing at $p$ and by $\mathrm{Aut}(M,p)$ the stability group of $M$, that is, those germs at $p$ of biholomorphisms mapping $M$ into itself and fixing $p$. For the study of $\mathrm{Aut}(M,p)$ and $\mathrm{hol}_0(M,p)$ of various hypersurfaces, we refer the reader to \cite{Bao} and the references therein.

In several complex variables, such tangential holomorphic vector fields arise naturally from the
action by the automorphism group of a domain. If $\Omega$ is a smoothly bounded domain in
$\CC^n$ and if its automorphism group $\mathrm{Aut}(\Omega)$ contains a one-parameter subgroup,
say $\{\varphi_t\}$, i.e., $\varphi_{t+s}=\varphi_t\circ \varphi_s$ for all $t,s\in \mathbb R$ and $\varphi_0=\mathrm{id}_\Omega$, then the $t$-derivative generates a holomorphic vector field. A boundary point $p\in \partial \Omega$ is called a \emph{parabolic orbit accumulation point} (resp. a \emph{hyperbolic orbit accumulation point}) if there is a one-parameter subgroup $\{\varphi_t\}_{t\in \mathbb R}$ of automorphisms such that $\lim_{t\to \pm \infty}\varphi_t(z_0)=p$ (resp. $\lim_{t\to + \infty}\varphi_t(z_0)=p$ and $\lim_{t\to -\infty}\varphi_t(z_0)=q$ for some $\partial \Omega \ni q\ne p$) for some $z_0\in \Omega$. In this circumstance, we call the holomorphic vector field generated by $\{\varphi_t\}_{t\in \mathbb R}$ a \emph{parabolic vector field} (resp. a \emph{hyperbolic vector field}).

In the case when the automorphisms of $\Omega$ extend across the boundary (cf. \cite{Bell-Lig, Fef}),
the vector field generated as such becomes a holomorphic vector field tangent to the boundary hypersurface $\partial\Omega$.
In particular, parabolic and hyperbolic holomorphic vector fields must vanish at their boundary orbit accumulation points. These facts tell us that the study of such vector fields closely pertains to the study of the non-compact automorphism group of $\Omega$, which has been done extensively by many authors (see \cite{IK} and the references therein). Their results, such as the Wong-Rosay theorem \cite{W, R} and the Bedford-Pinchuk-Berteloot theorems \cite{B-P1, B-P2, B-P3, Ber}, depend on the existence of an orbit of an interior point by the action of the automorphism group accumulating at a pseudoconvex boundary point of D'Angelo finite type \cite{D}. For the complementary cases, Greene and Krantz posed a conjecture that for a smoothly
bounded pseudoconvex domain admitting a non-compact automorphism group, the point orbits can
accumulate only at a point of finite type \cite{GK}. The interested reader is referred to the recent papers \cite{IK, Kim-Ninh} for this conjecture.

This paper continues the work that started in \cite{Kim-Ninh} motivated by the following question.
\begin{problem}
 Assume that $(M,p)$ is a non-Leviflat CR hypersurface germ in $\mathbb C^n$ such that $p$ is a point of D'Angelo infinite type.  Characterize all holomorphic vector fields tangent to $M$ vanishing at $p$.
\end{problem}
More precisely, we present a characterization of holomorphic vector fields which are tangent to a $\mathcal{C}^\infty$-smooth hypersurface germ $(M,0)$ of D'Angelo infinite type at the origin $0=(0,0)$ in $\mathbb C^2$ and  vanish at $0$ (cf. Theorems \ref{T1} and \ref{T3} in the next section). As a consequence of our results, any point of D'Angelo infinite type is neither a parabolic nor a hyperbolic orbit accumulation point; this gives a partial answer to the Greene-Krantz conjecture. 

This paper is organized as follows. Two main theorems are stated in Section \ref{S2}. In Section \ref{S3}, we prove Lemma \ref{lemma6} which is a linearization of holomorphic vector fields. Section \ref{S4} is devoted to the proof of Theorem \ref{T1}. In Section \ref{S5}, we introduce the condition $(\mathrm{I})$ and give several examples of functions satisfying the condition $(\mathrm{I})$. The proof of Theorem \ref{T3} is given in Section \ref{S6}. Finally, several technical lemmas are pointed out in Appendix A.

\section{Main results}\label{S2}
For the sake of smooth exposition, we would like to explain the main results of
this article, deferring the proof to the later sections.
\smallskip

Let $M$ be a $\mathcal{C}^\infty$-smooth real hypersurface germ $(M,0)$. Then it admits the following expression:
\begin{equation} \label{eq}
M= \big\{(z_1,z_2)\in \mathbb C^2: \rho(z_1,z_2)
=\mathrm{Re}~z_1+P(z_2)+
(\mathrm{Im}~z_1)Q(z_2,\mathrm{Im}~z_1)=0\big\},
\end{equation}
where $P$ and $Q$ are $C^\infty$-smooth functions with $P(0)=0, dP(0)=0$, and $Q(0,0)=0$. We now discuss what the concept of infinite type means.

Following \cite{D}, we consider a smooth real-valued function $f$ defined in a neighborhood of
$0$ in $\mathbb C$.  Let $\nu_0(f)$ denote the order of vanishing of $f$ at $0$, by the first
nonvanishing degree term in its Taylor expansion at $0$. In the case when $f$ is a mapping into $\mathbb R^k~(k > 1)$, we consider the order of vanishing of all the components and take the smallest one among them for the vanishing order of $f$.  Denote it by
$\nu_0 (f)$. Also denote by $\Delta_r = \{z \in \mathbb C \colon |z|<r\}$ for $r>0$ and by $\Delta:=\Delta_1$. Then the origin is called a \emph{point of D'Angelo infinite type} if, for every integer $\ell > 0$, there exists a holomorphic map
$h:\Delta \to \mathbb C^2$ with $h(0)=(0,0)$ such that
$$
\nu_0 (h) \not=\infty \hbox{ and } \frac{\nu_0 (\rho \circ h)}{\nu_0(h)} > \ell.
$$

We note that if $P$ contains no harmonic terms, then $M$ is of D'Angelo infinite type if and only if $P$ vanishes to infinite order at $0$ (see \cite[Theorem $2$]{Kim-Ninh}). Moreover, in the case that $P(z_2)$ is positive on a punctured disk, K.-T. Kim and the author \cite{Kim-Ninh} showed that there is no non-trivial holomorphic vector field vanishing at the origin tangent to any $\mathcal{C}^\infty$-smooth real hypersurface germ $(M,0)$, except the two following cases:
\begin{itemize}
 \item[(A)] The vanishing order of $Q(z_2,0)$ at $z_2=0$ is finite and $Q(z_2,0)$ contains a monomial term $z_2^k$ for some positive integer $k$.
\item[(B)] The real hypersurface $M$ is rotationally symmetric, i.e., after a change of variable in $z_2$, $\rho(z_1,z_2)=\rho(z_1,|z_2|)$, and in this case the holomorphic vector field is of the form $i\beta z_2 \frac{\partial}{\partial z_2}$ for some non-zero real number $\beta$ (see also \cite{By1}).
\end{itemize}
It is well-known that any rotationally symmetric hypersurface admits non-trivial tangential holomorphic vector fields vanishing at an infinite type point (see also \cite[Theorem $2.1$]{By1}). 

We shall now introduce another class of real hypersurfaces (of course, the case $(\mathrm{A})$ is violated) admiting also non-trivial $\mathrm{hol}_0(M,p)$. Given a nonzero holomorphic function $a(z)=\sum_{n=1}^\infty a_n z^n$ defined on $\Delta_{\epsilon_0}:=\{z\in \mathbb C\colon |z|<\epsilon_0\}~(\epsilon_0>0)$, $\mathcal{C}^\infty$-smooth functions $p,q$ defined respectively on $(0,\epsilon_0)$ and $[0,\epsilon_0)$ satisfying that $q(0)=0$ and that the function
\[
g(z)=
\begin{cases}
e^{p(|z|)}&~\text{if}~0<|z|<\epsilon_0\\
0&~\text{if}~z=0 
\end{cases}
\]
is $\mathcal{C}^\infty$-smooth and vanishes to infinite order at $z=0$, and an $\alpha\in\mathbb R$, we denote by $M(a,\alpha, p,q)$ the germ at $(0,0)$ of a real hypersurface defined by
$$
\rho(z_1,z_2):= \mathrm{Re}~z_1+P(z_2)+F(z_2,\mathrm{Im}~z_1) =0,
$$
where $F$ and $P$ are respectively defined on $\Delta_{\epsilon_0}\times (-\delta_0,\delta_0)$ ($\delta_0>0$ small enough) and $\Delta_{\epsilon_0}$ by
\[
 F(z_2,t)=\begin{cases}
 -\frac{1}{\alpha}\log \Big|\frac{\cos \big(R(z_2)+\alpha t\big)}{\cos (R(z_2))} \Big| &~\text{if}~ \alpha\ne 0\\
 \tan(R(z_2))t  &~\text{if}~ \alpha =0,
\end{cases} 
 \]
 where $R(z_2)=q(|z_2|)- \mathrm{Re}\big(\sum_{n=1}^\infty\frac{a_n}{n} z_2^n\big)$ for all $z_2\in \Delta_{\epsilon_0}$,
and
\begin{equation*}
\begin{split}
  P(z_2)=
 \begin{cases}
\frac{1}{\alpha} \log \Big[ 1+\alpha P_1(z_2)\Big]~&\text{if}~ \alpha \ne 0\\
 P_1(z_2) ~&\text{if}~ \alpha=0,
\end{cases}
\end{split}
\end{equation*}
where 
\begin{equation*}
\begin{split}
P_1(z_2)=\exp\Big[p(|z_2|)+\mathrm{Re}\Big(\sum_{n=1}^\infty  \frac{a_n}{in}z_2^n\Big ) -\log \big|\cos\big(R(z_2)\big)\big|   \Big]
\end{split}
\end{equation*}
for all $z_2\in \Delta_{\epsilon_0}^*$ and $P_1(0)=0$.

Then we can see that $P,F$ are $\mathcal{C}^\infty$-smooth in $\Delta_{\epsilon_0}$ and $P$ vanishes to infinite order at $0$, and hence $M(a,\alpha, p,q)$ is $\mathcal{C}^\infty$-smooth and is of infinite type. 

It follows from \cite[Theorem $3$]{NCM} that the holomorphic vector field
$$
H^{a,\alpha} (z_1,z_2):=L^\alpha (z_1) a(z_2)\frac{\partial }{\partial z_1}+iz_2\frac{\partial }{\partial z_2},
$$
where 
\[
L^\alpha(z_1)=
\begin{cases}
\frac{1}{\alpha}\big(\exp(\alpha z_1)-1\big)&\text{if}~ \alpha \ne 0\\
z_1 &\text{if}~ \alpha =0,
\end{cases}
\]
is tangent to $M(a,\alpha,p,q)$. In addition, $\mathrm{hol}_0\big(M(a,\alpha,p,q),0\big)$ is generated by $H^{a,\alpha}$ (cf. \cite[Corollary $2$]{NCM}) and $\mathrm{Aut}\big(M(a,\alpha,p,q),0\big)$ only consists of the following germs at $0$ of CR automorphisms
\begin{equation*}
\phi^{a,\alpha}_t(z_1,z_2)=
\begin{cases}
\Big(-\frac{1}{\alpha} \log\Big[1+(e^{-\alpha z_1}-1) \exp\big(\int_0^t a(z_2e^{i\tau})d\tau \big)\Big], z_2 e^{it}\Big)&~\text{if}~\alpha\ne 0\\
\Big(z_1 \exp\big(\int_0^t a(z_2e^{i\tau})d\tau \big), z_2 e^{it}\Big)&~\text{if}~ \alpha=0
\end{cases}
\end{equation*}
for all $t\in \mathbb R$, i.e., $\mathrm{Aut}\big(M(a,\alpha,p,q),0\big)$ is the one-parameter group generated by $H^{a,\alpha}$ (cf. \cite[Theorem A]{Ninh1})). 
  
The first aim of this paper is to prove the following theorem, which gives a classification of pairs $(H,M)$ of holomorphic vertor fields $H$ tangent to real hypersurfaces $M$.  
\begin{theorem}\label{T1} If a non-trivial holomorphic vector field germ $(H,0)$ vanishing at the origin is tangent to
a real non-rotationally symmetric hypersurface germ $(M,0)$ defined by the equation
$\rho(z) := \rho(z_1,z_2)=\mathrm{Re}~z_1+P(z_2)+ F(z_2,\mathrm{Im}~z_1)=0$
satisfying the conditions: 
\begin{itemize}
\item[(i)] $F(z_2,t)$ is real-analytic in a neighborhood of $0\in \mathbb C\times \mathbb R$ satisfying $F(z_2,0)\equiv 0$,
\item[(ii)] $P(z_2)>0$ for any $z_2 \not= 0$, and 
\item[(iii)] $P$ vanishes to infinite order at $z_2=0$,
\end{itemize}
then, after a change of variable in $z_2$, $M=M(a,\alpha,p,q)$ and $H=\beta H^{a,\alpha}$ for some nonzero holomorphic function $a$ with $a(0)=0$, $\mathcal{C}^\infty$-smooth real-valued functions $p,q$, and $\beta\in \mathbb R$.
\end{theorem}
\begin{remark}
It is worth noting that the conclusion of Theorem \ref{T1} says that there are no hyperbolic or parabolic orbits of CR automorphisms of $\big(M,0\big)$ accumulating at $0$, since $\phi^{a,\alpha}_t(z)\not \to 0$ as $t\to +\infty$.
\end{remark}
\begin{remark}
As to the hypothesis of the theorem, the condition $\mathrm{(iii)}$ simply tells us that $0$ is a point of infinite type. 
\end{remark}
\begin{remark}
The condition $\mathrm{(i)}$ plays a significant role in the proof of Theorem \ref{T1}. Because of the real-analyticity of $F$, using its power series expansion, each coefficient of $t^k~(k=0,1,\ldots)$ in the equation (\ref{maineq}) imposes some differential equation and therefore our proof follows (cf. Section \ref{S4}). However, in general the function $F$ in the definition of $M(a,\alpha,p,q)$ is not necessarily real-analytic. Moreover, the question of whether there is another $\mathcal{C}^\infty$-smooth real hypersurface of infinite type in $\mathbb C^2$ with non-trivial $\mathrm{hol_0(M,0)}$ remains open.
\end{remark}

We would like to emphasize here that the assumption on the positivity of a function $P$ is essential in the proofs of Theorem \ref{T1} and the main theorems in \cite{Kim-Ninh}. The  following theorem, in which the positivity of a function $P$ is not necessary, is our second main result.
\begin{theorem}\label{T3}
If a $\mathcal{C}^\infty$-smooth hypersurface germ $(M,0)$ is defined by the equation
$\rho(z) := \rho(z_1,z_2)=\mathrm{Re}~z_1+P(z_2)+ (\mathrm{Im}~z_1) Q(z_2,
\mathrm{Im}~z_1)=0$,
satisfying the conditions:
\begin{itemize}
\item[(i)] $P\not \equiv 0$, $P(0)=0$;
\item[(ii)] $P$ satisfies the condition $(\mathrm{I})$ (cf. Definition \ref{def1} in Section \ref{S5});
\item[(iii)] $P$ vanishes to infinite order at $z_2=0$,
\end{itemize}
then any holomorphic vector field vanishing at the origin tangent to $(M,0)$ is identically
zero.
\end{theorem}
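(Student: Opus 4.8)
The plan is to start with a holomorphic vector field $H=h_1(z_1,z_2)\,\partial/\partial z_1+h_2(z_1,z_2)\,\partial/\partial z_2$, where $h_1,h_2$ are holomorphic near $0$ with $h_1(0)=h_2(0)=0$, to assume $H$ is tangent to $M$, and to show that the tangency equation together with $(1)$--$(3)$ forces every Taylor coefficient of $h_1,h_2$ at $0$ to vanish. Since $\rho_{x_1}(0)=1$, I would first parametrize $M$ near $0$ by $(z_2,t)\in\CC\times\RR$ via $z_1=w(z_2,t):=it-P(z_2)-tQ(z_2,t)$, and rewrite $\mathrm{Re}(H\rho)|_M\equiv 0$ as the single real identity
\[
\mathrm{Re}\!\left[\,h_1(w,z_2)\Big(\tfrac12-\tfrac{i}{2}\big(Q+tQ_t\big)\Big)+h_2(w,z_2)\big(P_{z_2}+tQ_{z_2}\big)\right]=0,
\]
valid for all $(z_2,t)$ near $(0,0)$, the subscripts denoting the evident partial and Wirtinger derivatives of $Q$ and $P$.

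The first substantive step is the slice $t=0$, namely $\mathrm{Re}\big[h_1(-P(z_2),z_2)\big(\tfrac12-\tfrac{i}{2}Q(z_2,0)\big)+h_2(-P(z_2),z_2)P_{z_2}(z_2)\big]=0$. By hypothesis $(3)$ the function $P$, hence also $P_{z_2}$, is $o(|z_2|^N)$ for every $N$; substituting $z_1=-P(z_2)$ and expanding $h_1,h_2$ in power series, every term carrying a positive power of $z_1$, as well as the entire $h_2$-term, is negligible against every monomial $z_2^k$, so
\[
\mathrm{Re}\big[h_1(0,z_2)\big(\tfrac12-\tfrac{i}{2}Q(z_2,0)\big)\big]=o(|z_2|^N)\qquad(z_2\to 0,\ \forall\,N).
\]
If $h_1(0,z_2)\not\equiv 0$, with lowest-order term $a\,z_2^{k_0}$ ($a\ne 0$), then along a suitable ray $z_2\to 0$ the left-hand side is comparable to $|z_2|^{k_0}$, contradicting this bound; hence $h_1(0,z_2)\equiv 0$, i.e.\ $h_1$ is divisible by $z_1$. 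Next I would differentiate the identity in $t$ at $t=0$ (using $w(z_2,0)=-P(z_2)$ and $\partial_tw(z_2,0)=i-Q(z_2,0)$) and, once again discarding the $P$-flat contributions by hypothesis $(3)$, read off the leading behaviour of $h_2$, with the goal of showing $h_2(z_1,z_2)=i\beta z_2+(\text{higher order})$ for some $\beta\in\RR$ and, more generally, of bringing $(h_1,h_2)$ into a normal form of the kind appearing in Theorem \ref{T1}.

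The crux -- and the point at which the absence of the positivity assumption of Theorem \ref{T1} must be compensated -- is to show that $\beta=0$. Here hypothesis $(2)$, that $P$ satisfies condition $(\mathrm{I})$ (Definition \ref{def1}), enters: in place of the maximum-principle/sign comparison available when $P>0$, condition $(\mathrm{I})$ is used to produce sequences $z_2=z_2^{(m)}\to 0$ along which $P$ is suitably dominated by the pluriharmonic quantities occurring in the expanded identity and along which $\arg z_2^{(m)}$ can be steered simultaneously; feeding such sequences into the $t$-expanded identity and matching orders forces $\beta=0$. Once $\beta=0$ the normal form collapses, giving $h_2\equiv 0$; the identity then reduces to $\mathrm{Re}\big[h_1(w,z_2)\big(\tfrac12-\tfrac{i}{2}(Q+tQ_t)\big)\big]\equiv 0$, and a final coefficient comparison -- again discarding the $P$-flat terms via hypothesis $(3)$ -- yields $h_1\equiv 0$, so that $H\equiv 0$.

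I expect the main obstacle to be exactly this selection of approach sequences: one must choose $z_2^{(m)}\to 0$ compatible at once with the zero set and the sign changes of the (no longer positive) function $P$, with the infinite-order flatness of $(3)$, and with keeping a prescribed quantity $\mathrm{Re}(c\,z_2^{k})$ bounded away from $0$ in modulus; condition $(\mathrm{I})$ is precisely the hypothesis engineered so that such a sequence exists. The remaining ingredients -- the bookkeeping of the $t$-expansion needed to isolate $\beta$, and the divisibility of $h_1$ by $z_1$ -- are finite computations with no conceptual difficulty once the slice $t=0$ is understood.
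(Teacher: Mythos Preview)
Your plan diverges from the paper's proof at the key step, and the divergence is a genuine gap rather than an alternative route.

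You propose to mirror the proof of Theorem~\ref{T1}: reach the normal form $h_2=i\beta z_2$ (after a holomorphic change of $z_2$) and then invoke condition $(\mathrm{I})$ to force $\beta=0$. The problem is that \emph{establishing} that normal form already consumes the positivity hypothesis that Theorem~\ref{T3} drops. In Claim~\ref{c1} of the proof of Theorem~\ref{T1}, the conclusion $m_0=0$, $n_0=1$, $b_{01}=i\beta\in i\mathbb R$ is obtained from the relation~(\ref{etn2}) by quoting \cite[Lemma~3]{Kim-Ninh}, and the subsequent reduction $b_j\equiv 0$ for $j\ge 1$ (Claims~\ref{c3} and~\ref{c5}) uses detailed information about $P$ coming from its positivity. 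None of this is available under the hypotheses of Theorem~\ref{T3}, so your sentence ``once $\beta=0$ the normal form collapses, giving $h_2\equiv 0$'' has no footing: you never reach a stage at which $h_2=i\beta z_2$ exactly.

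The paper's proof does \emph{not} pass through the normal form at all. The essential device---absent from your outline---is the substitution $t=\alpha P(z_2)$ with a \emph{free real parameter} $\alpha$, giving equation~(\ref{eq28}). Condition $(\mathrm{I})$ is then applied immediately, via Lemma~\ref{l3} (which uses $(\mathrm{I.1})$) when $n_0\ge 1$, and via Lemma~\ref{l8} (which uses $(\mathrm{I.2})$ together with the freedom to choose $\alpha$) when $n_0=0$; both cases contradict~(\ref{eq28}) directly. This yields $h_1\equiv 0$, and a second pass with the same two lemmas disposes of $h_2$. Your ``differentiate in $t$ at $t=0$'' produces factors $(i-Q_0(z_2))^{m}$ with no adjustable parameter, so it cannot play the role of Lemma~\ref{l8}: for a putative lowest term $b_{m_0 0}z_1^{m_0}$ in $h_2$ (the case $n_0=0$, $m_0\ge 1$) you would be left with $\mathrm{Re}\big(i^{m_0}b_{m_0 0}\,P'/P\big)$ bounded, and neither $(\mathrm{I.1})$ (which needs $k\ge 1$) nor $(\mathrm{I.2})$ (which controls only $|P'/P|$) contradicts that. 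The parameter $\alpha$ is precisely what lets one rotate $(i\alpha-1)^{m_0}b_{m_0 0}$ so that condition $(\mathrm{I.2})$ bites; this is the content of Lemma~\ref{l8} and is the step your outline is missing.
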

\begin{remark}
Theorems \ref{T1} and \ref{T3}, combined with \cite[Theorem $3$]{Kim-Ninh}, are devoted to make a partial answer to the Greene-Krantz conjecture. 
\end{remark}
\begin{remark}[Notations]
Taking the risk of confusion we employ the notations
$$
P'(z) = P_{z} (z) = \frac{\partial P}{\partial z} (z);~ F_{z}(z,t)=\frac{\partial F}{\partial z}(z,t);~F_{t}(z,t)=\frac{\partial F}{\partial t}(z,t)
$$
throughout the paper.  Of course for a function of single real variable $f(t)$, we shall continue using $f'(t)$ for its derivative, as well. In what follows, $\lesssim$ and $\gtrsim$ denote inequalities up to a positive constant multiple. In addition, we use $\approx $ for the combination of $\lesssim$ and $\gtrsim$.
\end{remark}

\section{Linearization of holomorphic vector fields}\label{S3}

Let $b(z)=i\beta z+\cdots$ ($\beta\in \mathbb R^*$) be a holomorphic function on a neighborhood $U$ of the origin. It was proved in \cite{GGJ} that there exists a conformal function $\Phi: V\to U$, where $U$ and $V$ are two open neighborhoods of the origin, such that $\Phi(0)=0$ and $z(t)=\Phi(w_0e^{i\beta t}), -\infty<t<+\infty$, is the solution of the differential equation $\frac{dz(t)}{dt}=b(z(t))=i\beta z(t)+\cdots$ satisfying $z(0)=\Phi(w_0)\in U$. Moreover, one gets 
$$  
\Phi'(w) i\beta w=b(\Phi(w))\;\text{for all}\; w\in V.
$$
The following lemma that will be of use later is a change of variables.
\begin{lemma}\label{lemma6} Let $a,b$ be two holomorphic functions defined on neighborhoods $\Delta_r\times U$ and $U$ of the origins in $\mathbb C^2$ and in $\mathbb C$, respectively, with $b(0)=0$ and $b'(0)=i\beta$, where $\beta\in \mathbb R^*$ and $r>0$. Then,  after the change of variables
$$
 z_1=w_1;z_2=\Phi(w_2),
$$
we obtain that
$$
H(z_1,z_2)= a(z_1,z_2)\frac{\partial }{\partial z_1}+ b(z_2)\frac{\partial }{\partial z_2}
$$
is tangent to the hypersurface
$$ 
M=\big\{(z_1,z_2)\in \Delta_r\times U:\rho(z_1,z_2)= \mathrm{Re}~z_1+  F(z_2,\mathrm{Im}~z_1)=0\big\},
$$
where $F$ is a $\mathcal{C}^1$-smooth function defined on $U\times (-r,r)$, if and only if
$$
\tilde H(w_1,w_2)=a(w_1,\Phi(w_2))\frac{\partial }{\partial z_1}+i\beta w_2\frac{\partial }{\partial z_2}
$$
is tangent to the hypersurface
$$ 
\tilde M=\big\{(w_1,w_2)\in \Delta_r \times V:\tilde\rho(w_1,w_2)= \mathrm{Re}~w_1+F(\Phi(w_2),\mathrm{Im}~w_1)=0\big\}.
$$
\end{lemma}
\begin{proof}
Since $\Phi'(w_2) i\beta w_2=b(\Phi(w_2))$ for all $w_2\in V$, it follows that
$$
i\beta w_2 F_{w_2}(\Phi(w_2),\mathrm{Im}~w_1)= i \beta w_2 \Phi'(w_2)F_{z_2}(z_2,\mathrm{Im}~z_1)=b(z_2)F_{z_2}(z_2,\mathrm{Im}~z_1)   
$$
for all $w_2\in V$. Therefore, we obtain that
\begin{equation*}
\begin{split}
&\mathrm{Re}~H(\rho(z_1,z_2))=\mathrm{Re}\Big[\Big(\frac{1}{2}+F_{z_1}(z_2,\mathrm{Im}~z_1)\Big) a(z_1,z_2)
+F_{z_2}(z_2,\mathrm{Im}~z_1) b(z_2)\Big]\\
&=\mathrm{Re}\Big[\Big(\frac{1}{2}+F_{w_1}(\Phi(w_2),\mathrm{Im}~w_1 )\Big) a(w_1,\Phi(w_2))+F_{w_2}(\Phi(w_2),\mathrm{Im}~w_1) )i \beta w_2\Big]\\
&=\mathrm{Re}~\tilde H(\tilde \rho(w_1,w_2))
\end{split}
\end{equation*}
 for every $(w_1,w_2)\in \Delta_r\times V$, which proves the assertion.
\end{proof}

\section{Proof of Theorem \ref{T1}}\label{S4}

This section is devoted to proving Theorem \ref{T1}. To do this, we divide the proof into six following claims from Claim \ref{c1} to Claim \ref{c6}.

As a first step we shall establish several equations that will be of use later. Let $H(z_1,z_2)=h_1(z_1,z_2)\frac{\partial}{\partial z_1}+h_2(z_1,z_2)\frac{\partial}{\partial z_2}$ and $M$ be a non-trivial holomorphic vector field and a real non-rotationally symmetric hypersurface, respectively, as in Theorem \ref{T1}. Then one has the identity
\begin{equation}\label{eq1221}
(\mathrm{Re}~H) \rho(z)=0,\; \forall z \in M.
\end{equation}

Expand $h_1$ and $h_2$ into the Taylor series at the origin so that
\begin{equation*}
\begin{split}
h_1(z_1,z_2)&=\sum\limits_{j,k=0}^\infty a_{jk} z_1^j z_2^k=\sum\limits_{j=0}^\infty z_1^j a_j(z_2);\\
h_2(z_1,z_2)&=\sum\limits_{j,k=0}^\infty b_{jk} z_1^jz_2^k=\sum\limits_{j=0}^\infty z_1^j b_j(z_2),
\end{split}
\end{equation*}
where $a_{jk}, b_{jk}\in \mathbb C$ and $a_j, b_j$ are holomorphic in a neighborhood of $0\in \mathbb C$ for all $j,k\in \mathbb N$. We note that $a_{00}=b_{00}=0$ since $h_1(0,0)=h_2(0,0)=0$. Moreover, the function $F(z_2, t)$ can be written as
$$
F(z_2,t)=tQ(z_2,t)=\sum_{j=0}^\infty t^{j+1} Q_j(z_2),
$$
where $Q_j~( j=1,2,\ldots)$ are real-analytic in a neighborhood of $0\in \mathbb C$ and $Q(z_2,t):=\sum_{j=0}^\infty t^j Q_j(z_2)$.

By a simple computation, one has 
\begin{equation*}
\begin{split}
 \rho_{z_1}(z_1,z_2)&= \frac{1}{2}+\frac{Q(z_2, \mathrm{Im}~z_1)}{2i}+(\mathrm{Im}~z_1)Q_{z_1}(z_2, \mathrm{Im}~z_1)\\
&= \frac{1}{2}+\frac{Q_0(z_2)}{2i}+\frac{2(\text{Im}~z_1) Q_1(z_2)}{2i}+\frac{3(\text{Im}~z_1)^2 Q_2(z_2)}{2i}+\cdots;\\
\rho_{z_2}(z_1,z_2)&= P'(z_2)+(\text{Im}~z_1) Q_{z_2}(z_2, \text{Im}~z_1),
\end{split}
\end{equation*}
and the equation (\ref{eq1221}) can thus be re-written as
\begin{equation}\label{etn0}
\begin{split}
&\mathrm{Re} \Big[\Big( \frac{1}{2}+\frac{Q(z_2, \mathrm{Im}~z_1)}{2i}+(\mathrm{Im}~z_1)Q_{z_1}(z_2, \mathrm{Im}~z_1)\Big)h_1(z_1,z_2)\\
&\quad +\Big(P'(z_2)+(\text{Im}~z_1)Q_{z_2}(z_2, \text{Im}~z_1)\Big) h_2(z_1,z_2)\Big ]=0
\end{split}
\end{equation}
for all $(z_1,z_2)\in M$.

Since $\Big(it-P(z_2)-tQ(z_2,t), z_2\Big)\in M$ for any $t \in \mathbb R$ with $t$ small enough, the above equation again admits a new form
\begin{equation}\label{eq201311}
\begin{split}
&\mathrm{Re}\Big[ \Big(\frac{1}{2}+\frac{Q_0(z_2)}{2i}+\frac{2t Q_1(z_2)}{2i}+\frac{3t^2 Q_2(z_2)}{2i}+\cdots\Big)\times \\
&\quad\Big(\sum_{j=0}^\infty \big(it-P(z_2)-tQ_0(z_2)-t^2 Q_1(z_2)-\cdots\big)^j a_j(z_2)\Big)\\
&\quad +\Big(P'(z_2)+t {Q_0}_{z_2}(z_2)+t^2 {Q_1}_{z_2}(z_2)+\cdots \Big)\times\\
&\quad \Big( \sum_{m=0}^\infty \big(it-P(z_2)-tQ_0(z_2)-t^2 Q_1(z_2)-\cdots\big)^m b_m(z_2)\Big)\Big]=0
\end{split}
\end{equation}
for all $z_2\in \mathbb C$ and for all $t\in\mathbb R$ with $|z_2|<\epsilon_0$ and $|t|<\delta_0$, where $\epsilon_0>0$ and $\delta_0>0$ are small enough. 

The next step is to demonstrate the following claims. First of all, the following is the first claim, in which its proof only requires the properties $\mathrm{(ii)}$ and $\mathrm{(iii)}$ of the function $P$.
\begin{claim}\label{c1}
 $h_1(0,z_2)\equiv 0$ and $h_2(0,z_2)=i\beta z_2 +\cdots$ for some $\beta\in \mathbb R^*$ and for all $z_2\in\Delta_{\epsilon_0}$. 
\end{claim}
\begin{proof}[Proof of the claim]
Indeed, it follows from (\ref{etn0}) with $t=0$ that 
\begin{equation} \label{e2013}
\begin{split}
\mathrm{Re} \Big [\Big(\frac{1}{2}+\frac{1}{2i}Q(z_2,0)\Big) h_1(0, z_2)\Big ]+O(P(z_2))+O(P'(z_2))=0,~\forall z_2\in\Delta_{\epsilon_0}.
\end{split}
\end{equation}
Because of the fact that $\nu_0(P)=\nu_0(P')=+\infty$, the equation (\ref{e2013}) yields that
$$
\mathrm{Re} \Big [\Big(\frac{1}{2}+\frac{1}{2i}Q(z_2,0)\Big) h_1(0, z_2)\Big ]=0,~\forall z_2\in\Delta_{\epsilon_0}.
$$
Moreover, since $h_1(0,0)=0$ and $Q(0,0)=0$, it is easy to show that the above equation implies that $h_1(0,z_2)\equiv 0$. 

Notice that one may choose $t=\alpha P(z_2)$ in (\ref{etn0})
(with $\alpha \in \mathbb R$ to be chosen later on). Then we get
\begin{equation}\label{eq27}
\begin{split}
\mathrm{Re} &\Big [\Big(\frac{1}{2}+\frac{1}{2i}Q(z_2,\alpha P(z_2))+
\alpha P(z_2)Q_{z_1}(z_2,\alpha P(z_2))\Big)\times \\
& \quad h_1\Big(i\alpha P(z_2)- P(z_2)- \alpha P(z_2)Q(z_2,\alpha P(z_2)), z_2\Big)\\
&\quad +\Big(P'(z_2)+ \alpha  P(z_2) Q_{z_2}(z_2,\alpha  P(z_2))\Big)\times\\
& \quad h_2\Big(i \alpha  P(z_2)- P(z_2)-\alpha  P(z_2)Q(z_2,\alpha P(z_2)),z_2\Big)
 \Big ]=0
\end{split}
\end{equation}
for all $z_2\in \Delta_{\epsilon_0}$.

We remark that if $h_2\equiv 0$, then (\ref{etn0}) shows that $h_1\equiv 0$. Conversely, if $h_1\equiv 0$, then by Lemma \ref{Al5} in Appendix A.3, $M$ is rotational symmetric, which is impossible. So one may assume that $h_1\not\equiv 0$ and $h_2\not\equiv 0$. Let $j_0$ be the smallest integer such that $a_{j_0 k}\ne 0$ for some integer $k$. Then let $k_0$ be the smallest integer such that $a_{j_0 k_0}\ne 0$. Similarly, let $m_0$ be the smallest integer such that $b_{m_0 n}\ne 0$ for some integer $n$. Then let $n_0$ be the smallest integer such that $b_{m_0 n_0}\ne 0$. Note that $j_0\geq 1$ since $h_1(0,z_2)\equiv 0$.

 Since $P(z_2)=o(|z_2|^{n_0})$, it follows from (\ref{eq27}) that
\begin{equation}\label{etn2}
\begin{split}
\mathrm{Re} &\Big[\frac{1}{2} a_{j_0k_0}(i\alpha -1)^{j_0}(P(z_2))^{j_0}z_2^{k_0}+
 b_{m_0n_0}(i\alpha -1)^{m_0}\big(z_2^{n_0}+o(|z_2|^{n_0})\big) \\
&\times (P(z_2))^{m_0} \Big(P'(z_2)+\alpha P(z_2)Q_{z_2}(z_2, \alpha P(z_2))\Big)   \Big ]=o(P(z_2)^{j_0}|z_2|^{k_0})
\end{split}
 \end{equation}
for all $|z_2|<\epsilon_0$ and for all $\alpha \in \mathbb R$ small enough. We note that in the case $k_0=0$ and $\mathrm{Re}(a_{j_0 0})=0$, $\alpha$ can be chosen in such a way that $\mathrm{Re}\big( (i\alpha-1)^{j_0}a_{j_0 0}\big)\ne 0$. Then the above equation yields that $j_0>m_0$. We conclude from Lemma \ref{Al3} in Appendix A.3 that $m_0=0,n_0=1$, and $b_{0,1}=i\beta z_2$ for some $\beta\in \mathbb R^*$. Therefore, the claim is proved.
\end{proof}

Now by a change of variables as in Lemma \ref{lemma6}, without loss of generality we may assume that $b_0(z_2)=i\beta z_2$. Moreover, we have the following claims.

\begin{claim}\label{c2} One has that $a_1(z_2)=\beta\sum_{n=1}^\infty a_n z_2^n\not \equiv 0$ and 
\begin{equation*}
\begin{split}
Q_0(z_2) &=\tan(R(z_2)) ;\\
P(z_2)&=\exp\Big[ p(|z_2|)+\mathrm{Re}\Big(\sum_{n=1}^\infty  \frac{a_n}{in}z_2^n\Big )-\log \big|\cos\big(R(z_2)\big )\big|+ v(z_2)\Big]
\end{split}
 \end{equation*}
for all $z_2\in \Delta_{\epsilon_0}^*$, where $R(z_2)=q(|z_2|)- \mathrm{Re}\Big(\sum_{n=1}^\infty \frac{a_n}{n} z_2^n\Big)$ for all $z_2\in \Delta_{\epsilon_0}^*$, $v$ is a $\mathcal{C}^\infty$-smooth function on $\Delta_{\epsilon_0}$ with $\nu_0(v)=+\infty$, and $q,p$ are $\mathcal{C}^\infty$-smooth functions on $(0,\epsilon_0)$ and are chosen so that $R$ is real-analytic in $\Delta_{\epsilon_0}$ and  that $P$ is $\mathcal{C}^\infty$-smooth in $\Delta_{\epsilon_0}$ with $\nu_0(P)=+\infty$.
\end{claim}
\begin{proof}[Proof of the claim]
First of all, taking $\frac{\partial}{\partial t}$ of both sides of the equation (\ref{eq201311}) at $t=0$, we obtain that
\begin{equation}\label{eq201313}
\begin{split}
&\mathrm{Re}\Big \{P'(z_2)\big(i-Q_0(z_2)\big)\Big[ b_1(z_2)+2(-P(z_2))b_2(z_2)+\cdots\\
&\quad +m(-P(z_2))^{m-1}b_m(z_2)+\cdots\Big]\\
&\quad +\frac{i}{2}\Big(1+Q_0^2(z_2)\Big)\Big[ a_1(z_2)+2(-P(z_2))a_2(z_2)+\cdots+m(-P(z_2))^{m-1}a_m(z_2)+\cdots\Big]\\
& \quad +{Q_0}_{z_2}\Big[ i\beta z_2+(-P(z_2))b_1(z_2)+\cdots+(-P(z_2))^{m}b_m(z_2)+\cdots\Big]\\
&\quad +\frac{Q_1(z_2)}{i}\Big[(-P(z_2))a_1(z_2)+(-P(z_2))^2 a_2(z_2)+\cdots\\
&\quad +(-P(z_2))^{m}a_m(z_2)+\cdots\Big] \Big\}=0
\end{split}
\end{equation}
for all $z_2\in \Delta_{\epsilon_0}$. Since $Q_0$ is real-analytic and $\nu_0(P)=\nu_0(P')=0$, one gets
\begin{equation}\label{eq2014}
\mathrm{Re}\Big[2i\beta  z_2 {Q_0}_{z_2}(z_2)+i a_1(z_2)\Big(1+Q_0^2(z_2)\Big)\Big]\equiv 0
\end{equation}
on $\Delta_{\epsilon_0}$. We note that the equation (\ref{eq2014}) shows that $\mathrm{Re}(ia_1(0))=0$.

Therefore, the solution $Q_0$ of Eq. (\ref{eq2014}) has the form as in the claim  (following the proof of Lemma \ref{pde1} in Appendix A.2).  In addition, since the real hypersuface $M$ is not rotationally symmetric, by \cite[Theorem 3]{Kim-Ninh} mentioned as in Section \ref{S2}, $Q_0$ must contain a monomial term $z_2^k$ for some positive integer $k$. Consequently, we have in fact that $a_1\not \equiv 0$.

Next, it follows from (\ref{eq201311}) with $t=0$ that
\begin{equation}\label{eq201316}
\begin{split}
&\mathrm{Re}\Big[-\Big( \frac{1}{2}+\frac{Q_0(z_2)}{2i}\Big)a_1(z_2)P(z_2) +  i\beta z_2 P'(z_2) \Big] +O(P(z_2)^2)+O(P'(z_2)P(z_2))=0,
\end{split}
\end{equation}
or equivalently
\begin{equation}\label{eq2013166}
\begin{split}
2\mathrm{Re}\Big(i\beta z_2 \frac{P_{z_2}(z_2)}{P(z_2)}\Big)=\mathrm{Re}\big( a_1(z_2)\big) +Q_0(z_2)\mathrm{Re}\big(\frac{a_1(z_2)}{i}\big)+ O(P(z_2))+O(P'(z_2))
\end{split}
\end{equation}
for every $z_2\in \Delta^*_{\epsilon_0}$. By \cite[Lemma 1]{Kim-Ninh}, it follows from Eq. (\ref{eq2013166}) that $\mathrm{Re}(a_1(0))=0$, which, together with the above-mentioned fact that $\mathrm{Re}(ia_1(0))=0$, shows that $a_1(0)=0$. 

Now the solution $P$ of Eq. (\ref{eq2013166}) has the form as claimed (following the proof of Lemma  \ref{pde1} in Appendix A.2). Therefore, this completes the proof.
\end{proof}
 We now observe that $\limsup_{r\to 0^+} |r p'(r)|=+\infty$, for otherwise one gets $|p(r)|\lesssim |\log(r)|$ for every $0<r<\epsilon_0$, and thus $P$ does not vanish to infinite order at $0$. Furthermore, a direct calculation shows that 
\begin{equation} \label{eq201318}
\begin{split}
z_2 \frac{P_{z_2}(z_2)}{P(z_2)} =\frac{1}{2}|z_2|p'(|z_2|)+ g(z_2) 
\end{split}
\end{equation}
for all $z_2\in \Delta_{\epsilon_0}$, where $g\in \mathcal{C}^\infty(\Delta_{\epsilon_0})$.
\begin{claim}\label{c3} $b_1\equiv 0$ on $\Delta_{\epsilon_0}$.
\end{claim}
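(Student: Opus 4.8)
The plan is to mine the first-order-in-$t$ consequence of the master identity (\ref{eq201311}), namely (\ref{eq201313}). In (\ref{eq201313}) the terms carrying neither a factor $P$ nor a factor $P'$ are exactly those whose real part is forced to vanish by (\ref{eq201314}), which was already obtained in the course of proving Claim \ref{c2}; subtracting that real-analytic identity off leaves
\[
\mathrm{Re}\Big[P'(z_2)\big(i-Q_0(z_2)\big)b_1(z_2)-P(z_2)\Big(i\big(1+Q_0^2(z_2)\big)a_2(z_2)+{Q_0}_{z_2}(z_2)b_1(z_2)+\tfrac{Q_1(z_2)}{i}a_1(z_2)\Big)\Big]=O\big(P(z_2)^2+P(z_2)P'(z_2)\big)
\]
for every $z_2\in\Delta_{\epsilon_0}$.

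Since $P(z_2)>0$ for $z_2\neq 0$ by hypothesis $(2)$, I would divide this identity by $P(z_2)$. As $P$ and $P'$ are flat at $0$, each resulting summand other than $\mathrm{Re}\big[\tfrac{P'(z_2)}{P(z_2)}(i-Q_0(z_2))b_1(z_2)\big]$ is either bounded near $0$ (the ones built only from the real-analytic functions $a_1,a_2,b_1,Q_0,Q_1$) or tends to $0$ as $z_2\to 0$ (the ones still carrying a factor $P'$ or a positive power of $P$). Hence $\mathrm{Re}\big[\tfrac{P'(z_2)}{P(z_2)}(i-Q_0(z_2))b_1(z_2)\big]$ stays bounded as $z_2\to 0$.

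Next I would substitute the expansion (\ref{eq201318}), rewriting this bounded quantity as $\mathrm{Re}\big[\big(\tfrac12|z_2|p'(|z_2|)+g(z_2)\big)\tfrac{(i-Q_0(z_2))b_1(z_2)}{z_2}\big]$ with $g\in\mathcal C^\infty(\Delta_{\epsilon_0})$, and then use the observation $\limsup_{r\to 0^+}|rp'(r)|=+\infty$ recorded just above the claim. Choose $r_k\to 0^+$ with $r_k|p'(r_k)|\to+\infty$, so also $|p'(r_k)|\to+\infty$. If $b_1(0)\neq 0$ then $\tfrac{(i-Q_0(z_2))b_1(z_2)}{z_2}=\tfrac{ib_1(0)}{z_2}+O(1)$, and letting $z_2=r_ke^{i\theta}$ with $\theta$ chosen to align the phase, the principal term $\tfrac12 p'(r_k)e^{-i\theta}\,ib_1(0)$ has modulus $\tfrac12|p'(r_k)b_1(0)|$, which dominates both the $O(r_k|p'(r_k)|)$ and the bounded contributions; the real part is then unbounded, a contradiction. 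Therefore $b_1(0)=0$. Writing $b_1(z_2)=z_2\beta_1(z_2)$ and repeating (now $\tfrac{(i-Q_0)b_1}{z_2}=(i-Q_0)\beta_1\to ib_1'(0)$ as $z_2\to 0$) shows in addition that $\mathrm{Re}(ib_1'(0))=0$, i.e.\ $b_1'(0)\in\mathbb R$.

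The main obstacle is to upgrade this partial information to the full assertion $b_1\equiv 0$. A straightforward iteration of the last step breaks down: at the $k$-th stage it only produces an estimate of the form $r^{k}|p'(r)|\lesssim 1$, and $\limsup_{r\to 0^+}|r^{k}p'(r)|$ need not be $+\infty$ once $k\geq 2$ — already $p(t)=-1/t$, the function used in Theorem \ref{T1}, gives $r^{2}|p'(r)|\equiv 1$. To finish, one must bring in the further identities obtained from the higher-order-in-$t$ coefficients of (\ref{eq201311}), in which $b_1$ recurs with coefficients not damped by extra powers of $z_2$, together with the explicit formulas for $Q_0$, $P$ and $a_1$ supplied by Claim \ref{c2}; combining these should force every Taylor coefficient of $b_1$ to vanish. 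I expect this bookkeeping to be the technical heart of the proof.
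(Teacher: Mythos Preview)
Your opening steps are correct and match the paper: isolate from (\ref{eq201313}) the identity (\ref{equation4}), divide by $P$, and conclude that $\mathrm{Re}\big[(i-Q_0)b_1\,P'/P\big]$ is bounded near $0$. Your argument that $b_1(0)=0$ via the expansion (\ref{eq201318}) differs from the paper's (which integrates along a trajectory $\gamma'=(i-Q_0)b_1$), but it works. You also correctly obtain $b_1'(0)\in\mathbb R$.

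The gap is in what comes next: the paper does \emph{not} pass to higher-order $t$-derivatives of (\ref{eq201311}) to kill the remaining Taylor coefficients of $b_1$, and your own remark about $p(t)=-1/t$ already shows why naive iteration of your argument cannot work. The actual mechanism stays entirely at first order in $t$ and uses two ingredients you have not brought in: the non-rotational-symmetry hypothesis on $M$, and the explicit formula for $P$ obtained in Claim~\ref{c2}. Assuming $b_1\not\equiv0$, one first sharpens your conclusion to $b_1'(0)=\tilde\beta\in\mathbb R^{*}$ (a Fourier-type argument on circles $|z_2|=r$ disposes of $b_1'(0)=0$ and of $b_1'(0)\notin\mathbb R$ simultaneously). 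Then one subtracts a multiple of (\ref{eq2013166}) from (\ref{equation4}) so that the leading $i\tilde\beta z_2\,P'/P$ cancels; what remains is that $\mathrm{Re}\big[c(z_2)\,|z_2|p'(|z_2|)\big]$ extends smoothly across $0$, where $c(z_2)=z_2^{-1}\big[(i-Q_0)b_1-i\tilde\beta z_2\big]$. Crucially, $\mathrm{Re}\,c\not\equiv0$ precisely because $Q_0$ contains a genuine non-harmonic monomial --- this is where non-rotational symmetry enters. That forces $p(r)\sim -c/r^{n}$ for some $c>0$ and $n\in\mathbb N^{*}$.

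With this shape of $p$ in hand, one returns to (\ref{eq201311}) at $t=0$ and to the formula for $P$ in Claim~\ref{c2}; comparing the two gives $2\,\mathrm{Re}\big(iz_2 v_{z_2}\big)=(\text{strictly positive})\cdot P(z_2)/|z_2|^{n}$ for the flat remainder $v$ of $\log P$. Integrating along the circle $|z_2|=r$ makes the left side vanish (periodicity) while the right side is strictly positive, yielding the contradiction. Thus the ``bookkeeping'' you anticipate is not the technical heart of the proof; the subtraction producing $c(z_2)$, the appeal to non-rotational symmetry, the resulting rigidity of $p$, and the final periodicity argument on $v$ are.
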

\begin{proof}[Proof of the claim]
To obtain a contradiction, we suppose that $b_1\not \equiv 0$, it follows from (\ref{eq201313}) and (\ref{eq2014}) that
\begin{equation}\label{equation4}
\begin{split}
&\mathrm{Re}\Big \{\Big( (i-Q_0(z_2))b_1(z_2)\Big)\frac{P'(z_2)}{P(z_2)}-i a_2(z_2)\Big(1+Q_0^2(z_2)\Big)\\
&\quad -{Q_0}_{z_2}(z_2)b_1(z_2)-\frac{Q_1(z_2)}{i}a_1(z_2)+O(P(z_2))+O(P'(z_2)) \Big\}\equiv 0
\end{split}
\end{equation}
on $\Delta_{\epsilon_0}$. We will show that $b_1(z_2)=\tilde \beta z_2+\cdots$ for some $\tilde \beta\in \mathbb R^*$. To prove this, we consider the following cases.

\smallskip

\noindent
{\bf Case 1.} {\boldmath $b_1(0)\ne 0$.} In this case, let $\gamma: (-1,1) \to \Delta_{\epsilon_0} \subset\mathbb C$ be a $\mathcal{C}^\infty$-smooth curve such that $\gamma'(t)=\big(i-Q_0(\gamma(t))\big)b_1(\gamma(t))$ for all $|t|<1$ and $\gamma(0)=0$. It follows from (\ref{equation4}) that $\mathrm{Re}\Big(\big(i-Q_0(z_2)\big)b_1(z_2) P_{z_2}(z_2)/P(z_2)\Big)$ is bounded on $\Delta^*_{\epsilon_0}$, and thus 
$$
\frac{d}{dt}\log P(\gamma(t))=2\mathrm{Re}\Big(\gamma'(t)P_{z_2}(\gamma(t))/P(\gamma(t))\Big)
$$
is also bounded on $(-1,1)$. This implies that $\log P(\gamma(t))=O(t)$, which contradicts the fact that $P(\gamma(t))\to 0$ as $t\to 0$. Therefore, we conclude that $b_1(0)=0$.

\smallskip

\noindent
{\bf Case 2.} {\boldmath ${b_1}'(0)\not \in \mathbb R^*$.}  It follows from (\ref{eq201318}) and (\ref{equation4}) that  
$$
u(z_2):=\mathrm{Re}\Big(\big(i-Q_0(z_2)\big)\tilde b_1(z_2)|z_2| p'(|z_2|)\Big)-\tilde g(z_2)=0
$$
for all $z_2\in \Delta_{\epsilon_0}$, where $\tilde g(z_2)$ is a $\mathcal{C}^\infty$-smooth real-valued function defined on $\Delta_{\epsilon_0}$ and $\tilde b_1(z_2):=b_1(z_2)/z_2$ if $z_2\ne 0$ and $\tilde b_1(0)=b_1'(0)$.

Since $\limsup_{r\to 0^+} r|p'(r)|= +\infty$, it follows that the function $\tilde g(z_2)\not \equiv 0$ and vanishes to finite order at $z_2=0$. It can therefore be written as $\tilde g(z_2)=\sum_{0\leq j\leq l} g_j z_2^{l-j}\bar z_2^j+o(|z_2|^l)$ with $g_j\in\mathbb C $ and $g_j=\overline{g_{l-j}}$, where $l=\nu_0(\tilde g)$. Because $\limsup_{r\to 0^+}| rp'(r)|=+\infty$, we have $m:=\nu_0(\tilde b_1)>l$, and thus by taking $\limsup_{r\to 0^+} \frac{1}{r^{l}}u(r e^{i\theta})$  for each $\theta\in\mathbb R$ we obtain that
$$
\cos\big(m\theta+\varphi)= \sum_{0\leq j\leq l} g_j e^{i (l-2j)\theta }
$$
for all $\theta\in\mathbb R$, where $\varphi $ is a real number. This implies that the functions $1, \cos (\theta)$, $\sin (\theta),\ldots,\cos (m\theta),\sin (m\theta)$ are linearly dependent, which leads to a contradiction. 

Altogether, we conclude that $b_1(z_2)=\tilde\beta z_2+\cdots=\tilde \beta z_2(1+O(z_2))$ for some $\tilde \beta \in \mathbb R^*$. Furthermore, from (\ref{eq2013166}) and (\ref{equation4}) we have that  
\begin{equation}\label{equation5}
\begin{split}
&\mathrm{Re}\Big \{\Big( (i-Q_0(z_2))b_1(z_2)-i\tilde \beta z_2\Big)\frac{P'(z_2)}{P(z_2)}\Big\}-\Big(1+Q_0^2(z_2)\Big)\mathrm{Re}\big(ia_2(z_2)\big)\\
&\quad -\mathrm{Re}\big({Q_0}_{z_2}(z_2)b_1(z_2)\big)-\mathrm{Re}\big(\frac{Q_1(z_2)}{i}a_1(z_2)\big)\\
&\quad -\frac{\tilde \beta}{2\beta}\Big(\mathrm{Re}\big( a_1(z_2)\big) +Q_0(z_2)\mathrm{Re}\big(\frac{a_1(z_2)}{i}\big)\Big)+ O(P(z_2))+O(P'(z_2))\equiv 0 
\end{split}
\end{equation}
on $\Delta_{\epsilon_0}^*$. Let us denote by $c(z_2)$ the real-analytic function on $\Delta_{\epsilon_0}$ defined by 
$$
c(z_2):=\frac{(i-Q_0(z_2))b_1(z_2)-i\tilde \beta z_2}{z_2}
$$
for all $z_2\in \Delta_{\epsilon_0}^*$. Since $Q_0$ contains  non-harmonic terms, $\mathrm{Re}(c(z_2))\not \equiv 0$. Moreover, by (\ref{eq201318}) and (\ref{equation5}) the function $\mathrm{Re}\big(c(z_2)\big)|z_2| p'(|z_2|)$ extends to be $\mathcal{C}^\infty$-smooth in $\Delta_{\epsilon_0}$.

We now prove that there exist $c>0$ and $n\in \mathbb N^*$ such that $p(r)=-\frac{c}{r^n}(1+\gamma(r))$ for all $0<r<\epsilon_0$, where $\gamma: [0,\epsilon_0)\to \mathbb R$ is $\mathcal{C}^\infty$-smooth and satisfies $\gamma(r)\to 0$ as $r\to 0$. Indeed, suppose otherwise. Then the function $\mathrm{Re}\big(c(z_2)\big)|z_2| p'(|z_2|)$ cannot extend to be $\mathcal{C^\infty}$-smooth in $\Delta_{\epsilon_0}$ since $\limsup_{r\to 0^+}r |p'(r)|=+\infty$ and $p(r)\not \approx -\frac{1}{r^m}$ for any $m\in \mathbb N^*$, which is a contradiction. Thus, the assertion is proved.

 We note that Eq. (\ref{eq201311}) with $t=0$ implies that 
\begin{equation}\label{eq20137181}
\begin{split}
&\mathrm{Re}\Big[-\Big( \frac{1}{2}+\frac{Q_0(z_2)}{2i}\Big)a_1(z_2) +i\beta z_2\frac{P'(z_2)}{P(z_2)}-b_1(z_2) P'(z_2) \Big] +O(P(z_2)))=0
\end{split}
\end{equation}
for all $z_2\in\Delta^*_{\epsilon_0}$. By Claim \ref{c2}, we have that
\begin{equation*}
\begin{split}
P(z_2)&=\exp\Big[ p(|z_2|)+\mathrm{Re}\Big(\sum_{n=1}^\infty  \frac{a_n}{in}z_2^n\Big )-\log \big|\cos\big(R(z_2)\big )\big|+ v(z_2)\Big],
\end{split}
 \end{equation*}
where $v\in \mathcal{C}^\infty(\Delta_{\epsilon_0})$. Moreover, a simple computation shows that
\begin{equation}\label{eq20137182}
\begin{split}
2\mathrm{Re}\Big(i\beta z_2 \frac{P_{z_2}(z_2)}{P(z_2)}\Big)=\mathrm{Re}\big( a_1(z_2)\big) +Q_0(z_2)\mathrm{Re}\big(\frac{a_1(z_2)}{i}\big)+2\mathrm{Re}\big(i\beta z_2 v_{z_2}(z_2)\big)
\end{split}
\end{equation}
for every $z_2\in \Delta^*_{\epsilon_0}$ and that
\begin{equation}\label{eq20137183}
\begin{split}
2\mathrm{Re}\Big(b_1(z_2)P_{z_2}(z_2)\Big)=2\mathrm{Re}\Big(\tilde \beta z_2 \big(1+O(z_2)\big)P_{z_2}(z_2)\Big)=n\tilde \beta c \frac{1}{|z_2|^n} \big(1+O(|z_2|)\big)P(z_2)
\end{split}
\end{equation}
for every $z_2\in \Delta^*_{\epsilon_0}$. Therefore, it follows from  (\ref{eq20137181}), (\ref{eq20137182}), and (\ref{eq20137183}) that 
\begin{equation}\label{eq20137184}
\begin{split}
2\mathrm{Re}\big(iz_2 v_{z_2}(z_2)\big)=nc\frac{\tilde \beta}{\beta} \frac{1}{|z_2|^n} \big(1+\tilde \gamma(z_2)\big)P(z_2)
\end{split}
\end{equation}
for every $z_2\in \Delta^*_{\epsilon_0}$, where $\tilde \gamma: \Delta_{\epsilon_0}\to \mathbb R$ is $ \mathcal{C}^\infty$-smooth and $\tilde{\gamma}(z_2)\to 0$ as $z_2\to 0$.

Choose $r\in (0,\epsilon_0)$ such that $\max_{|z_2|=r}|\tilde \gamma(z_2)|\leq\frac{1}{2}$ and $P(re^{it})=\exp\big(p(r)+O(r)\big)=\exp\big(-\frac{c}{r^n}(1+\gamma(re^{it}))+O(r)\big)\geq \exp\big(-\frac{2c}{r^n}\big)$ for all $0\leq t\leq 2\pi$. Let $u(t):=v(re^{it})$ for all $t\in \mathbb R$. Then by (\ref{eq20137184}), one gets
$$
u'(t)=nc\frac{\tilde \beta}{\beta} \frac{1}{r^n} \big(1+\tilde \gamma(re^{it})\big)P(re^{it})
$$
for all $t\in \mathbb R$. Thus, we obtain that
\begin{equation*}
\begin{split}
0=|u(2\pi)-u(0)|&=nc\frac{\tilde \beta}{\beta} \left|\int_{0}^{2\pi} \frac{P(re^{it})}{r^n}\big(1+\tilde \gamma(re^{it})\big) dt\right|\\
                &\geq nc\frac{\tilde \beta}{\beta}\int_{0}^{2\pi} \frac{P(re^{it})}{r^n}\big(1-|\tilde \gamma(re^{it})|\big) dt \geq  nc \frac{\tilde \beta}{2\beta}\int_{0}^{2\pi} \frac{e^{-\frac{2c}{r^n}}}{r^n} dt\\
                &=nc\pi\frac{\tilde \beta}{\beta}\frac{e^{-\frac{2c}{r^n}}}{r^n} >0,
\end{split}
\end{equation*}
which is impossible, and hence our claim is proved.
\end{proof}

\begin{claim}\label{c4} $a_2(z_2)\equiv Q_1(0)a_1(z_2)$ and $Q_1(z_2)\equiv Q_1(0)\Big(1+Q_0^2(z_2)\Big)$ on $\Delta_{\epsilon_0}$.
\end{claim}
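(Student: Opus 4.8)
The plan is to push the $t$-expansion of (\ref{eq201311}) two steps further than (\ref{eq201314}), and then to turn the two resulting scalar identities into a rigidity statement for the single function $Q_1/(1+Q_0^2)$.

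\emph{Two identities.} Using $b_1\equiv 0$ from Claim \ref{c3} together with the fact that $P$ and $P'$ vanish to infinite order (so every term carrying a factor $P$ or $P'$ is negligible in a real-analytic identity), I would first specialize (\ref{equation4}) — whose derivation from (\ref{eq201313}) and (\ref{eq201314}) does not use $b_1\not\equiv 0$ — to $b_1\equiv 0$, which after discarding the infinite-order remainder gives
\[
\big(1+Q_0^2(z_2)\big)\,\mathrm{Im}\,a_2(z_2)=Q_1(z_2)\,\mathrm{Im}\,a_1(z_2).
\]
Next I would differentiate (\ref{eq201311}) twice in $t$, set $t=0$, insert $b_1\equiv 0$, and discard infinite-order terms; after using (\ref{eq201314}) and the identity just obtained to cancel the $Q_0$-heavy contributions, a (mechanical but lengthy) simplification using $Q_0=\tan R$, i.e.\ $1+Q_0^2=\sec^2 R$, leaves
\[
\big(1+Q_0^2\big)\,\mathrm{Re}\,a_2=Q_1\,\mathrm{Re}\,a_1-2Q_0Q_1\,\mathrm{Im}\,a_1-2\beta\,\mathrm{Im}\big(z_2\,{Q_1}_{z_2}\big).
\]

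\emph{Reduction.} Since $Q_0=\tan R$ with $R$ real analytic (Claim \ref{c2}) and $Q_0(0)=0$ (as $R(0)=q(0)=0$), the function $u:=Q_1/(1+Q_0^2)$ is real-valued and real analytic on $\Delta_{\epsilon_0}$ with $u(0)=Q_1(0)$. Dividing the two identities above by $1+Q_0^2$ and using (\ref{eq2014}) in the form $2\beta\,\mathrm{Im}(z_2{Q_0}_{z_2})=-(1+Q_0^2)\,\mathrm{Im}\,a_1$, the cross terms collapse and one is left with $\mathrm{Im}\,a_2=u\,\mathrm{Im}\,a_1$ and $\mathrm{Re}\,a_2=u\,\mathrm{Re}\,a_1-2\beta\,\mathrm{Im}(z_2\,\partial_{z_2}u)$, that is,
\[
a_2(z_2)=u(z_2)\,a_1(z_2)-2\beta\,\mathrm{Im}\big(z_2\,\partial_{z_2}u(z_2)\big).
\]

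\emph{Rigidity.} Applying $\partial_{\bar z_2}$ to the last identity and using $\partial_{\bar z_2}a_1=\partial_{\bar z_2}a_2=0$ produces a linear equation for $u$ alone,
\[
\big(a_1(z_2)-i\beta\big)\,\partial_{\bar z_2}u+i\beta\, z_2\,\partial_{z_2}\partial_{\bar z_2}u-i\beta\,\bar z_2\,\partial_{\bar z_2}^2u=0 .
\]
Expanding $u=\sum_{j,k\ge 0}c_{jk}z_2^j\bar z_2^k$ with $c_{jk}=\overline{c_{kj}}$ and $a_1=\beta\sum_{n\ge 1}a_nz_2^n$ and matching the coefficient of $z_2^m\bar z_2^{k-1}$ gives the recursion $\sum_{n=1}^m a_n c_{m-n,k}+i(m-k)c_{mk}=0$ for all $m\ge 0$, $k\ge 1$; an induction on $j+k$ then forces $c_{jk}=0$ for every $j\ne k$, so $u$ is radial, $u(z_2)=\varphi(|z_2|^2)$. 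Checking this radiality — and in particular that it survives the delicate low-degree cases — is the step I expect to demand the most care. Once it is available, $z_2\,\partial_{z_2}u=|z_2|^2\varphi'(|z_2|^2)$ is real, so the displayed identity for $a_2$ becomes $a_2=u\,a_1$; as $a_1$ is holomorphic and $\not\equiv 0$ (Claim \ref{c2}), the real-valued quotient $u=a_2/a_1$ extends holomorphically across the isolated zeros of $a_1$ and must therefore be a real constant, $u\equiv u(0)=Q_1(0)$. This yields at once $Q_1\equiv Q_1(0)\big(1+Q_0^2\big)$ and $a_2\equiv Q_1(0)\,a_1$, which is the claim.
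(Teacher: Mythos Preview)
Your argument is correct and genuinely different from the paper's. The paper packages the three relations (\ref{eq2014}), (\ref{equation6}), (\ref{q2}) into Lemma~\ref{pde2} and proves that lemma by a vanishing--order comparison: if $Q_1(0)=0$ one reads off $\nu_0(a_2)=\nu_0(Q_1)+\nu_0(a_1)$ from (b) but $\nu_0(Q_1)=\nu_0(a_2)$ from (c), a contradiction unless $Q_1\equiv 0$; the case $Q_1(0)\ne 0$ is reduced to the first by subtracting the expected answer. Your route is to introduce $u=Q_1/(1+Q_0^2)$, which collapses the two scalar identities to the single relation $a_2=u\,a_1-2\beta\,\mathrm{Im}(z_2\partial_{z_2}u)$, then apply $\partial_{\bar z_2}$ and run a power--series recursion to force $u$ radial, and finally invoke holomorphy of $a_2/a_1$ to pin $u$ to a constant. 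The paper's method is shorter and needs only orders of vanishing; yours avoids the case split, makes transparent why the ratio $Q_1/(1+Q_0^2)$ is the right invariant, and (although you do not use it) the recursion at indices $m>k$ already gives $a_{m-k}c_{kk}=0$, so radiality plus $a_1\not\equiv 0$ forces constancy directly without the holomorphy step. One cosmetic remark: your ``induction on $j+k$'' really only uses the recursion for $m<k$, with the cases $m>k$ handled by the reality constraint $c_{mk}=\overline{c_{km}}$; saying this explicitly would sharpen the exposition.
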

\begin{proof}[Proof of the claim]
Since $b_1\equiv 0$ (cf. Claim \ref{c3}), by (\ref{equation4}) and note that $Q_0, Q_1$ are real-analytic, and $P(z_2),P'(z_2)$ vanish to infinite order at $0$, one has
\begin{equation}\label{equation6}
\mathrm{Re}\Big[i \Big(1+Q_0^2(z_2)\Big)a_2(z_2)- i Q_1(z_2) a_1(z_2) \Big]\equiv 0
\end{equation}
on $\Delta_{\epsilon_0}$.

On the other hand, taking $\frac{\partial^2}{\partial t^2}$ of both sides of Eq. (\ref{eq201311}) at $t=0$, we have that
\begin{equation}\label{q1}
\begin{split}
&\mathrm{Re}\Big\{\frac{3Q_2(z_2)}{2i} \Big(-P(z_2) a_1(z_2)+P(z_2)^2 a_2(z_2)+\cdots +(-P(z_2))^{m-1} a_{m-1}(z_2)+\cdots \Big) \\
&\quad+ \frac{Q_1(z_2)}{i} \big(i-Q_0(z_2)\big)\Big( a_1(z_2)-2P(z_2) a_2(z_2)+\cdots\\
&\quad + m (-P(z_2))^{m-1} a_{m}(z_2)+\cdots \Big)+\Big(\frac{1}{2}+\frac{Q_0(z_2)}{2i}\Big)\\
&\quad \times \Big(-Q_1(z_2) a_1(z_2)+\Big[(i-Q_0(z_2))^2+ 2P(z_2)Q_1(z_2)\Big]a_2(z_2)+\cdots \\
&\quad + \Big[\frac{(m+1)m}{2}(-P(z_2))^{m-1}(i-Q_0(z_2))^2 -(m+1)(-P(z_2))^mQ_1(z_2)\Big]a_{m+1}(z_2)\\
&\quad +\cdots\Big)+(Q_0)_{z_2}(z_2)\big(i-Q_0(z_2)\big)  \Big( b_1(z_2)-2 P(z_2) b_2(z_2)+\cdots\\
&\quad + m(-P(z_2))^{m-1}b_m(z_2)+\cdots \Big)\\
&\quad+ (Q_1)_{z_2}(z_2)  \Big(i\beta z_2-P(z_2) b_1(z_2)+\cdots+(-P(z_2))^m b_m(z_2)+\cdots \Big)\\
&\quad+P'(z_2)\Big(-Q_1(z_2) b_1(z_2)+\Big[(i-Q_0(z_2))^2+ 2P(z_2)Q_1(z_2)\Big]b_2(z_2)+\cdots\\
&\quad +  \Big[\frac{m(m-1)}{2}(-P(z_2))^{m-2}(i-Q_0(z_2))^2 -m(-P(z_2))^{m-1}Q_1(z_2)\Big]b_{m}(z_2)\\
&\quad +\cdots\Big)\Big\}\equiv 0~\text{on}~  \Delta_{\epsilon_0}.
\end{split}
\end{equation}
Since $Q_0, Q_1$ are real-analytic, $\nu_0(P)=\nu_0(P')=+\infty$, and $b_1\equiv 0$, we deduce that
\begin{equation}\label{q2}
\begin{split}
&\mathrm{Re}\Big\{i\beta z_2(Q_1)_{z_2}(z_2)+\frac{Q_1(z_2)}{i} (i-Q_0(z_2)) a_1(z_2)+\Big(\frac{1}{2}+\frac{Q_0(z_2)}{2i}\Big)\\
&\quad \times\Big(-Q_1(z_2) a_1(z_2)+(i-Q_0(z_2))^2a_2(z_2)\Big)\Big\}\equiv 0
\end{split}
\end{equation}
on $\Delta_{\epsilon_0}$. This equation implies that $\mathrm{Re}(a_2(0))=0$.
Moreover, Eq. (\ref{equation6}) shows that $\mathrm{Re}(ia_2(0))=0$. Thus $a_2(0)=0$. 

Now the equations (\ref{eq2014}), (\ref{equation6}), and (\ref{q2}) yield the proof of the claim (see Lemma \ref{pde2} in Appendix A.2).
\end{proof}
\begin{claim} \label{c5} One has that
$a_m(z_2)\equiv  \frac{2^{m-1}}{m!}Q_1^{m-1}(0) a_1(z_2) $ and $b_{m-1}(z_2)\equiv 0$ on $\Delta_{\epsilon_0}$ for all $m\geq 2$.
\end{claim}

\begin{proof}[Proof of the claim]
We shall prove the claim by  induction on $m$. For $m=2$, it follows from Claim \ref{c4} and Claim \ref{c3} that $a_2(z_2)\equiv Q_1(0) a_1(z_2)$ and $b_1(z_2)\equiv 0$, respectively. Suppose that $a_2(z_2)\equiv Q_1(0) a_1(z_2), \ldots , a_{m}(z_2)\equiv \frac{2^{m-1}}{m!}Q_1^{m-1}(0) a_1(z_2)$, $b_1(z_2)\equiv \cdots\equiv b_{m-1}(z_2)\equiv 0$ for $m\geq 2$. We will show that $b_{m}(z_2)\equiv 0$ and $a_{m+1}(z_2) \equiv \frac{2^{m}}{(m+1)!}Q_1^{m}(0) a_1(z_2)$.

Indeed, by (\ref{eq201313}) we have
\begin{equation}\label{equation7}
\begin{split}
&\mathrm{Re}\Big \{(-1)^{m-1} m (i-Q_0(z_2))b_m(z_2)\frac{P'(z_2)}{P(z_2)}+(-1)^{m}(m+1)\frac{i}{2}\Big(1+Q_0^2(z_2)\Big)a_{m+1}(z_2)\\
& \quad+(-1)^m b_m(z_2){Q_0}_{z_2}(z_2)+(-1)^m\frac{Q_1(z_2)}{i}  a_m(z_2)+O(P(z_2))+O(P'(z_2)) \Big\}\equiv 0
\end{split}
\end{equation}
on $\Delta_{\epsilon_0}$.

Repeating the argument as in the proof of Claim \ref{c3}, we deduce that $b_m(z_2)\equiv 0$. Thus we obtain that
\begin{equation}\label{qt1}
\begin{split}
&\mathrm{Re}\Big \{(-1)^{m}(m+1)\frac{i}{2}\Big(1+Q_0^2(z_2)\Big)a_{m+1}(z_2)+(-1)^m\frac{Q_1(z_2)}{i}  a_m(z_2)\Big\}\equiv 0.
\end{split}
\end{equation}
$\Delta_{\epsilon_0}$. Consequently, one has $\mathrm{Re}(i a_{m+1}(0))=0$.

On the other hand, since $Q_0,Q_1,Q_2$ are real-analytic, $\nu_0(P)=\nu_0(P')=+\infty$, and $b_1(z_2)\equiv \cdots \equiv b_m(z_2)\equiv 0$, from (\ref{q1}) we have 
 \begin{equation}\label{qt2}
\begin{split}
&\mathrm{Re}\Big \{\frac{3Q_2(z_2)}{2i} a_{m-1}(z_2) +m\frac{Q_1(z_2)}{i} (i-Q_0(z_2))a_m(z_2)\\
&\quad + \Big(\frac{1}{2}+\frac{Q_0(z_2)}{2i}\Big)\Big(\frac{m(m+1)}{2}(i-Q_0(z_2))^2a_{m+1}(z_2)-mQ_1(z_2)a_m(z_2)\Big)
\Big\}\equiv 0
\end{split}
\end{equation}
on $\Delta_{\epsilon_0}$. This implies that $\mathrm{Re}(a_{m+1}(0))=0$, which, together with $\mathrm{Re}(i a_{m+1}(0))=0$ as above, indicates that $a_{m+1}(0)=0$.

Furthermore, since $Q_1(z_2)\equiv Q_1(0)\Big(1+Q_0^2(z_2)\Big)$ (cf. Claim \ref{c4}), we conclude from (\ref{qt1}) that
$$
a_{m+1}(z_2)\equiv \frac{2}{m+1}Q_1(0) a_m(z_2)\equiv \cdots\equiv \frac{2^m}{(m+1)!} Q_1^m(0)a_1(z_2),
$$
as claimed. 
\end{proof}
\begin{claim}\label{c6} One has that
\begin{itemize}
\item[(a)] \[
 F(z_2,t)=\begin{cases}
 -\frac{1}{2Q_1(0)}\log \Big|\frac{\cos \big(R(z_2)+2Q_1(0)t\big)}{\cos (R(z_2))} \Big| &~\text{if}~ Q_1(0)\ne 0\\
 \tan(R(z_2))t  &~\text{if}~ Q_1(0)=0
\end{cases} 
 \]
for all $(z_2,t)\in \Delta_{\epsilon_0}\times (-\delta_0,\delta_0)$, where $R$ is given in Claim \ref{c5}.
\item[(b)]
\[ P(z_2)=
 \begin{cases}
\frac{1}{2Q_1(0)} \log \Big[ 1+2Q_1(0)P_1(z_2)\Big]~&\text{if}~ Q_1(0)\ne 0\\
 P_1(z_2) ~&\text{if}~ Q_1(0)=0
\end{cases}
\]
for all $z_2\in \Delta_{\epsilon_0}$, where 
 \begin{equation*}
\begin{split}
P_1(z_2)=\exp\Big(p(|z_2|)+\mathrm{Re}\Big(\sum_{n=1}^\infty  \frac{a_n}{in}z_2^n\Big ) -\log \big|\cos\big(R(z_2)\big )\big|   \Big)
\end{split}
\end{equation*}
for all $z_2\in \Delta_{\epsilon_0}^*$ and $P_1(0)=0$, where $p,q$ are the functions given in Claim \ref{c2}.
\end{itemize}
\end{claim}
\begin{proof}[Proof of the claim]
By Claim \ref{c5}, it is easy to check that $h_1(z_1,z_2)=z_1a_1(z_2)$ if $Q_1(0)=0$ and 
$$h_1(z_1,z_2)=\frac{1}{2Q_1(0)}\Big[\exp \Big(2Q_1(0)z_1\Big)-1\Big]a_1(z_2)$$ if $Q_1(0)\ne 0$ and $h_2(z_1,z_2)=i\beta z_2$. 

Now we divide the proof into the two following cases.

\smallskip
\noindent
{\bf Case A.~}{\boldmath $Q_1(0)=0$.} From Eq. (\ref{eq201311}) we have that
\begin{equation}\label{eq20141111}
\begin{split}
&\mathrm{Re}\Big\{ \Big(\frac{1}{2}+\frac{Q_0(z_2)}{2i}+\frac{2t Q_1(z_2)}{2i}+\frac{3t^2 Q_2(z_2)}{2i}+\cdots\Big) \\
&\quad\times \Big(it-P(z_2)-tQ_0(z_2)-t^2Q_1(z_2)-\cdots \Big)a_1(z_2)\\
&\quad +\Big(P'(z_2)+t {Q_0}_{z_2}(z_2)+t^2 {Q_1}_{z_2}(z_2)+\cdots \Big) i\beta z_2\Big\}=0
\end{split}
\end{equation}
for all $z_2\in \mathbb C$ and for all $t\in\mathbb R$ with $|z_0|<\epsilon_0$ and $|t|<\delta_0$. Then Eq. (\ref{eq20141111}) with $t=0$ implies easily that 
\begin{equation}\label{eqabc}
\begin{split}
&\mathrm{Re}\Big\{i\beta z_2 P'(z_2)-\Big(\frac{1}{2}+\frac{Q_0(z_2)}{2i}\Big)P(z_2)a_1(z_2) \Big\}\equiv 0
\end{split}
\end{equation}
on $\Delta_{\epsilon_0}$. Therefore, by Lemma \ref{pde1} in Appendix A.2 the function $P(z_2)\equiv P_1(z_2)$, as desired.

Now by Claim \ref{c4}, it follows that $Q_1\equiv 0$, and thus taking $\frac{\partial^2}{\partial t^2}$ of both sides of (\ref{eq20141111}) at $t=0$, we obtain that 
$$
\mathrm{Re}\Big(\frac{3Q_2(z_2)}{2i}(-P(z_2)) a_1(z_2)\Big)\equiv 0
$$
on $\Delta_{\epsilon_0}$. This implies that $Q_2\equiv 0$. Taking $\frac{\partial^m}{\partial t^m}$ of both sides of (\ref{eq20141111}) at $t=0$ for $m=3,\ldots$, we obtain, by induction on $m$, that $Q_m\equiv 0$ for all $m\geq 1$. Therefore, from Eq. (\ref{eq20141111}) and Eq. (\ref{eqabc}) we have
\begin{equation*}
\mathrm{Re}\Big[2i\beta  z_2 {Q_0}_{z_2}(z_2)+i a_1(z_2)\Big(1+Q_0^2(z_2)\Big)\Big]\equiv 0
\end{equation*}
on $\Delta_{\epsilon_0}$. Hence, the solution $Q_0(z_2)=\tan (R(z_2))$ for all $z_2\in \Delta_{\epsilon_0}$, where $R$ is given in the claim  (see Lemma \ref{pde1} in Appendix A.2), and hence $F(z_2,t)=Q_0(z_2)t=\tan (R(z_2))t$ for all $(z_2,t)\in \Delta_{\epsilon_0}\times (-\delta_0,\delta_0)$, as claimed.

\smallskip
\noindent
{\bf Case B.~}{\boldmath $Q_1(0)\ne 0$.} In this case, it follows from (\ref{etn0}) that
\begin{equation*}
\begin{split}
&\mathrm{Re}\Big\{ \Big(\frac{1}{2}+\frac{F_t(z_2,t)}{2i}\Big) \frac{1}{2 Q_1(0)} \Big[\exp \Big(2Q_1(0)\big(it-P(z_2)-F(z_2,t) \big) \Big)-1\Big] a_1(z_2)\\
&\quad +\Big(P'(z_2)+{F}_{z_2}(z_2,t)\Big) i\beta z_2\Big\}=0,
\end{split}
\end{equation*}
or equivalently 
\begin{equation}\label{eq201411}
\begin{split}
&\mathrm{Re}\Big\{ i\beta z_2 P'(z_2)+\frac{\exp\big(-2Q_1(0)P(z_2)\big)-1}{2Q_1(0)} \Big(\frac{1}{2}+\frac{Q_0(z_2)}{2i}\Big)a_1(z_2)\Big\}\\
&+e^{-2Q_1(0)P(z_2)}\mathrm{Re}\Big\{ \Big[\frac{i+F_t(z_2,t)}{2iQ_1(0)} \exp \Big(2Q_1(0)\big(it-F(z_2,t) \big) \Big)-\frac{i+F_t(z_2,0)}{2iQ_1(0)}\Big ] a_1(z_2)\Big\}\\
&+\mathrm{Re}\Big\{ i\beta z_2{F}_{z_2}(z_2,t)-\frac{F_t(z_2,t)-\tan(R(z_2))}{2iQ_1(0)}a_1(z_2)\Big\}=0
\end{split}
\end{equation}
for all $z_2\in \mathbb C$ and for all $t\in\mathbb R$ with $|z_0|<\epsilon_0$ and $|t|<\delta_0$. 

Now we shall show the following assertions:
\begin{itemize}
\item[(i)] $\mathrm{Re}\Big\{\Big[\big(i+F_t(z_2,t)\big)\exp\Big(2Q_1(0)\big(it-F(z_2,t)\big)\Big)-\big(i+F_t(z_2,0)\big)\Big]ia_1(z_2)\Big\}=0$ ;
 \item[(ii)] $\mathrm{Re}\Big[4iQ_1(0)\beta  z_2 F_{z_2}(z_2,t)+\Big(F_t(z_2,t)-\tan(R(z_2))\Big) ia_1(z_2)\Big]= 0$;
 \item[(iii)]$\mathrm{Re}\Big(i\beta z_2 P'(z_2)\Big)=-\frac{\exp\big(-2Q_1(0)P(z_2)\big)-1}{2Q_1(0)} \mathrm{Re}\Big[\Big(\frac{1}{2}+\frac{Q_0(z_2)}{2i}\Big)a_1(z_2)\Big] $
\end{itemize}
for all $(z_2,t)\in \Delta_{\epsilon_0}\times (-\delta_0,\delta_0)$.

Indeed, inserting $t=0$ into (\ref{eq201411}) one has $\mathrm{(iii)}$. Since the function $F(z_2,t)=\sum_{n=1}^\infty Q_{n-1}(z_2) t^n$ is real-analytic in a neighborhood of $0\in \mathbb C\times \mathbb R$, $P(z_2)$ vanishes to infinite order at $z_2=0$, and $a_1$ is holomorphic, it follows the assertion $\mathrm{(i)}$. Finally, $\mathrm{(ii)}$ is easily obtained.

By $\mathrm{(i)}$, it follows from Lemma \ref{pde3} in Appendix A.2 with $\alpha =2Q_1(0)$ that 
\[
 F(z_2,t)=\begin{cases}
 -\frac{1}{2Q_1(0)}\log \Big|\frac{\cos \big(R(z_2)+2Q_1(0)t\big)}{\cos (R(z_2))} \Big| &~\text{if}~ Q_1(0)\ne 0\\
 \tan(R(z_2))t  &~\text{if}~ Q_1(0) = 0
\end{cases} 
 \]
for all $(z_2,t)\in \Delta_{\epsilon_0}\times (-\delta_0,\delta_0)$. We note that 
$$
2\mathrm{Re}\Big(i\beta z_2 R_{z_2}(z_2)\Big)=- \mathrm{Re}\big(ia_1(z_2)\big)
$$
 for all $z_2\in \Delta_{\epsilon_0}$. Hence, by Corollary \ref{pde4}  in Appendix A.2 we conclude that  Eq. $\mathrm{(ii)}$  automatically holds. Finally, by Eq. $\mathrm{(iii)}$ and Lemma \ref{pde1} in Appendix A.2 with $\alpha=2Q_1(0)$, we conclude that the function $P(z_2)$ has the form as in the claim.

Altogether, the claim is proved.
\end{proof}

In conclusion, Claims \ref{c1}, \ref{c2},\ldots, and \ref{c6} complete the proof of Theorem \ref{T1}, in which $a(z_2):=a_1(z_2)/\beta$ and $\alpha:=2Q_1(0)$, (modulo Lemmas \ref{pde1}, \ref{pde2}, and \ref{pde3}, and Corollary \ref{pde4} which we prove in Appendix A.2). \hfill $\Box\;$
\section{Functions vanishing to infinite order}\label{S5}
In this section, we will introduce the condition $(\mathrm{I})$ and give several examples of functions defined on the open unit disc in the complex plane with infinite order of vanishing at the origin.

\begin{define} \label{def1} We say that a real $\mathcal{C}^1$-smooth function $f$ defined on a neighborhood $U$ of the origin in $\mathbb C$  satisfies the \emph{condition $(I)$} if 
\begin{itemize}
\item[(I.1)]$ \limsup_{\tilde U \ni z\to 0} |\mathrm{Re}(b z^k \frac{f'(z)}{f(z)})| =+\infty$;
\item[(I.2)] $ \limsup_{\tilde U \ni z\to 0} | \frac{f'(z)}{f(z)}| =+\infty$
\end{itemize}
for all $k=1,2,\ldots$ and for all $b\in \mathbb C^*$, where $\tilde U :=\{z\in U: f(z)\ne 0\}$.
\end{define}
\begin{example}
The function $P(z)=e^{-C/|\mathrm{Re}(z)|^\alpha}$ if $\mathrm{Re}(z)\ne 0$ and $P(z)=0$ if otherwise, where $C, \alpha>0$, satisfies the condition $(\mathrm{I})$. Indeed, a direct computation shows that 
$$
P'(z)=P(z)\frac{C\alpha}{2|\mathrm{Re}(z)|^{\alpha+1}}
$$
for all $z\in \mathbb C$ with $\mathrm{Re}(z)\ne 0$. Therefore, it is easy to see that $ | P'(z)/P(z)| \to+\infty$ as $z\to 0$ in the domain $\{z\in \mathbb C\colon \mathrm{Re}(z)\ne 0\}$. 

Now we shall prove that the condition $(\mathrm{I.1})$ holds. Let $k$ be an arbitrary positive integer. Let $z_l:=1/l+i/ l^\beta$, where $0<\beta<\min\{1,\alpha/(k-1)\}$ if $k>1$ and $\beta=1/2$ if $k=1$, for all $l\in \mathbb N^*$. Then $z_l \to 0$ as $l\to\infty$ and $\mathrm{Re}(z_l)=1/l\ne 0$ for all $l\in \mathbb N^*$. Moreover, for each $b\in \mathbb C^*$ we have that
\begin{equation*}
\begin{split}
|\mathrm{Re} \Big(b z_l^{k} \frac{P'(z_l)}{P(z_l)}\Big)|&\gtrsim  \frac{l^{\alpha+1}}{l^{\beta (k-1)+1}}                                                              =l^{\alpha-\beta (k-1)}.
\end{split}
\end{equation*}
This implies that 
$$
\lim_{l\to\infty}|\mathrm{Re} \Big(b z_l^{k} \frac{P'(z_l)}{P(z_l)}\Big)|=+\infty .
$$
Hence, the function $P$ satisfies the condition $(\mathrm{I})$.
\end{example}
\begin{remark} \label{R2}

\noindent
i) Any rotational function $P$ does not satisfy the condition $(\mathrm{I.1})$ because $\mathrm{Re}(iz P'(z))=0$ (see \cite{Kim-Ninh} or \cite{By1}).

\noindent
ii) It follows from \cite[Lemma 2]{Kim-Ninh} that if $P$ is a non-zero $\mathcal{C}^1$-smooth function defined on a neighborhood $U$ of the origin in $\mathbb C$, $P(0)=0$, and $\tilde U:=\{z\in U\colon P(z)\ne 0\}$ contains a $\mathcal{C}^1$-smooth curve $\gamma : (0,1]\to \tilde U$ such that $\gamma'$ stays bounded on $(0,1]$ and $\lim_{t\to 0^-} \gamma(t)=0$, then $P$ satisfies the condition $(\mathrm{I.2})$.  

\end{remark}
\begin{lemma} Suppose that $g:(0,1] \to \mathbb R$ is a $\mathcal{C}^1$-smooth  unbounded function. Then we have $\limsup_{t\to 0^+} t^{\alpha}|g'(t)|=+\infty$ for any real number $\alpha <1$.
\end{lemma}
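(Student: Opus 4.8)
The plan is to argue by contradiction, reducing the statement to a single elementary integral estimate. First I would note that since $g$ is $\mathcal{C}^1$ on $(0,1]$, it is in particular continuous there, hence bounded on every compact subinterval $[\varepsilon,1]$; therefore the hypothesis that $g$ is unbounded on $(0,1]$ forces $\sup_{0<t<\varepsilon}|g(t)|=+\infty$ for every $\varepsilon\in(0,1]$. In other words, all of the unboundedness of $g$ is concentrated at $t=0^+$.

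Next I would suppose, for contradiction, that $\limsup_{t\to 0^+}t^{\alpha}|g'(t)|=M<+\infty$. Then there exists $\delta\in(0,1)$ such that $|g'(t)|\le (M+1)\,t^{-\alpha}$ for all $t\in(0,\delta)$. Since $g\in\mathcal{C}^1$, the fundamental theorem of calculus gives, for all $0<s<t<\delta$,
\[
|g(t)-g(s)|\le \int_s^t|g'(u)|\,du\le (M+1)\int_s^t u^{-\alpha}\,du .
\]
This is the one and only place where the hypothesis $\alpha<1$ enters: because $-\alpha>-1$, the function $u\mapsto u^{-\alpha}$ is integrable at the origin, so $\int_s^t u^{-\alpha}\,du\le \int_0^{\delta}u^{-\alpha}\,du=\dfrac{\delta^{1-\alpha}}{1-\alpha}$. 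Fixing $t=\delta/2$, this yields $|g(s)|\le |g(\delta/2)|+(M+1)\dfrac{\delta^{1-\alpha}}{1-\alpha}$ for every $s\in(0,\delta/2)$, so $g$ is bounded near $0$, contradicting the first paragraph. Hence $\limsup_{t\to 0^+}t^{\alpha}|g'(t)|=+\infty$, as claimed.

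I do not expect a genuine obstacle here; the proof is short. The only point deserving emphasis is the sharpness of the hypothesis $\alpha<1$, which is precisely what makes $u^{-\alpha}$ integrable at $0$: for $\alpha=1$ one may take $g(t)=\log(1/t)$, and for $\alpha>1$ one may take $g(t)=t^{1-\alpha}$, in each case obtaining an unbounded $\mathcal{C}^1$ function with $t^{\alpha}|g'(t)|$ bounded as $t\to 0^+$.
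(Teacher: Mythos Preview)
Your proof is correct and follows essentially the same contradiction argument as the paper: assume the $\limsup$ is finite, deduce a bound $|g'(t)|\le C t^{-\alpha}$ near $0$, integrate using $\alpha<1$ to conclude $g$ is bounded near $0$, and contradict the unboundedness of $g$. Your version is in fact slightly more careful in localizing the bound on $g'$ to a neighborhood $(0,\delta)$ of the origin, whereas the paper writes the bound on all of $(0,1)$, but the underlying idea is identical.
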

\begin{proof} Fix an arbitrary $\alpha<1$. Suppose that, on the contrary, $\limsup_{t\to 0^+} t^\alpha|g'(t)|<+\infty$. Then there is a constant $C>0$ such that 
$$
|g'(t)|\leq \frac{C}{t^\alpha}, \; \forall \; 0<t<1.
$$
We now have the following estimate
\begin{equation*}
\begin{split}
|g(t)|&\leq |g(1)|+\int_t^1|g'(\tau)| d\tau\leq  |g(1)|+C\int_t^1\frac{d\tau}{\tau^\alpha}\\
           &\leq  |g(1)|+ \frac{C}{1-\alpha} (1-t^{1-\alpha})\lesssim 1.
\end{split}
\end{equation*}
However, this is impossible since $g$ is unbounded on $(0,1]$, and thus the lemma is proved.
\end{proof}

In general, the above lemma does not hold for $\alpha\geq 1$. This follows from that $|t^{1+\beta}\frac{d}{dt}\frac{1}{t^\beta}|= \beta$ and $|t\frac{d}{dt}\log (t)|= 1$ for all $0<t<1$, where $\beta>0$. However, the following lemmas show that there exists such a function $g$ such that $\liminf_{t\to 0^+}\sqrt{t}|g'(t)|<+\infty$ and $\limsup_{t\to 0^+} t^\beta |g'(t)|=+\infty$ for all $\beta<2$. Furthermore, several examples of smooth functions vanishing to infinite order at the origin in $\mathbb C$ and satisfying the condition $(\mathrm{I})$ are constructed.  
\begin{lemma}\label{lemma1} There exists a $\mathcal{C}^\infty$-smooth real-valued
function $g: (0,1)\to \mathbb R$ satisfying
\begin{enumerate}
\item[(\romannumeral1)] $g(t)\equiv -2n$ on the closed interval
$ \Big[\dfrac{1}{n+1}\Big(1+\dfrac{1}{3n}\Big),\dfrac{1}{n+1}\Big(1+\dfrac{2}{3n}\Big) \Big]$ for $n=4,5,\ldots$;
\item[(\romannumeral2)] $g(t)\approx\dfrac{-1}{t}$, $\forall ~t\in (0,1)$;
\item[(\romannumeral3)] for each $k\in \mathbb N$ there exists $C(k)>0$, depending only on $k$, such that $|g^{(k)}(t)|\leq \dfrac{C(k)}{t^{3k+1}},\;\forall \; t\in (0,1)$.
\end{enumerate}
\end{lemma}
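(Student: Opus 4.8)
The plan is to build $g$ directly, as a sequence of constant plateaus, $g\equiv-2n$ on each $I_n$, glued together by one fixed smooth transition profile rescaled to the gap between consecutive plateaus, and extended by a constant near $t=1$. The one genuine point is a balancing of scales: the gap separating $I_{n+1}$ from $I_n$ has length of order $n^{-2}$, which is wide enough to host a $C^\infty$ transition (so that $g$ is smooth, not merely $C^k$ for each $k$) and, at the same time, narrow enough that the rescaled derivatives of that transition obey the bound $|g^{(k)}|\lesssim t^{-(3k+1)}$ required in $(\mathrm{iii})$.

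First I would dispose of the elementary geometry. Write $I_n=[a_n,b_n]$ with $a_n=\frac{1}{n+1}\paren{1+\frac{1}{3n}}$ and $b_n=\frac{1}{n+1}\paren{1+\frac{2}{3n}}$. Then $\frac{1}{n+1}<a_n<b_n<\frac1n$, so the intervals $I_n$, $n\ge4$, are pairwise disjoint with $|I_n|=\frac{1}{3n(n+1)}$, and a short computation gives the length of the gap between $I_{n+1}$ and $I_n$,
$$
\ell_n:=a_n-b_{n+1}=\frac{2}{3n(n+2)}>0 \qquad (n\ge4).
$$
Hence the plateaus $I_n$ and the gaps $[b_{n+1},a_n]$, $n\ge4$, exhaust $(0,b_4]$, sharing endpoints, with $b_n\to0$; and $\ell_n\approx n^{-2}\approx|I_n|$.

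Now fix, once and for all, a non-decreasing $\phi\in C^\infty(\RR)$ with $\phi\equiv0$ on $(-\infty,\tfrac13]$ and $\phi\equiv1$ on $[\tfrac23,\infty)$, and define $g(t)=-2n$ on $I_n$, and $g(t)=-2(n+1)+2\phi\paren{\tfrac{t-b_{n+1}}{\ell_n}}$ on $[b_{n+1},a_n]$, together with $g\equiv-8$ on $[b_4,1)$. Since $\phi$ is locally constant near $0$ and near $1$, the pieces agree to infinite order at every endpoint $a_n,b_n$, so $g\in C^\infty((0,1))$, and $(\mathrm{i})$ holds by construction. For $(\mathrm{ii})$, on $(0,b_4]$ one has $t\in(\tfrac1{n+2},\tfrac1n)$ while $|g(t)|\in[2n,2n+2]$ on each plateau and its adjacent gap, so $|g(t)|\approx n\approx t^{-1}$ uniformly, and on $[b_4,1)$ both $t$ and $|g(t)|$ are pinched between positive constants. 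For $(\mathrm{iii})$, note $g$ is constant on each $I_n$ and on $[b_4,1)$, so for $k\ge1$ the derivative $g^{(k)}$ is supported on the gaps, where $g^{(k)}(t)=2\ell_n^{-k}\phi^{(k)}\paren{\tfrac{t-b_{n+1}}{\ell_n}}$ gives $|g^{(k)}(t)|\le2\|\phi^{(k)}\|_\infty\,\ell_n^{-k}\lesssim n^{2k}$; since $t\le a_n<\tfrac1n$ on each gap,
$$
|g^{(k)}(t)|\,t^{3k+1}\lesssim n^{2k}\cdot n^{-(3k+1)}=n^{-(k+1)}\le1 ,
$$
which is the asserted estimate (the case $k=0$ being the upper bound already contained in $(\mathrm{ii})$). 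The step I expect to need the most care is precisely this last bookkeeping, but it goes through with room to spare — the inequality $2k\le3k+1$ is what makes it work — and one sees along the way that the exact endpoints of the $I_n$ are immaterial: gaps of any width between, say, $n^{-3}$ and $n^{-2}$ would serve equally well.
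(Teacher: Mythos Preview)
Your construction is correct and cleaner than the paper's. Both proofs assemble $g$ from constant plateaus joined by smooth transitions, but the mechanisms differ. The paper defines a piecewise-linear $G$ whose plateaus are slightly wider than the $I_n$ (by $\epsilon_n=n^{-3}$), then sets $g_n=G*\psi_{\epsilon_{n+1}}$ on $[a_{n+1},b_n]$ and glues the $g_n$ together; the derivative bound comes from the convolution estimate $|g_n^{(k)}|\le 2(n+1)\|\psi^{(k)}\|_1\,\epsilon_{n+1}^{-k}\approx n^{3k+1}$, which saturates the exponent in~(iii). You instead fill each gap $[b_{n+1},a_n]$ of length $\ell_n=\tfrac{2}{3n(n+2)}$ by a single rescaled profile $\phi$, obtaining $|g^{(k)}|\lesssim \ell_n^{-k}\approx n^{2k}$; this is strictly stronger and explains your remark that $2k\le 3k+1$ leaves ``room to spare''. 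Your route avoids the two-scale machinery (widened plateaus plus a finer mollification radius) and makes the smoothness at the endpoints transparent via the flatness of $\phi$ near $0$ and $1$; the paper's mollification approach, by contrast, is more portable to situations where one starts from a non-piecewise-constant model and wants to smooth it without building transitions by hand.
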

\begin{remark}
Let
\begin{equation*}
P(z):=
\begin{cases}
\exp(g(|z|^2)) & \text{if}~0<|z|<1 \\
0  &\text{if}~z=0.
\end{cases}
\end{equation*}
Then this function is a $\mathcal{C}^\infty $-smooth function on the open unit disc $ \Delta $ that
vanishes to infinite order at the origin. Moreover, we see that $P'( \frac{2n+1}{2n(n+1)})=0$ for any $n\geq 4$, and hence $\liminf_{z\to 0}|P'(z)|/P(z)=0$.

Lemma \ref{lemma1} was stated in \cite{Kim-Ninh} without proof. A detailed proof of this lemma is given in Appendix A.1. 
\end{remark}
\begin{lemma}
Let $h:  (0,+\infty)\to \mathbb R$ be the piecewise linear function such that $h(a_n)=h(b_n)=2^{2\cdot 4^{n-1}}$, $h(1/2)=\sqrt{2}$ and $h(t)=0$ if $t\geq 1$, where $a_n=1/2^{4^n},~a_0=1/2,~b_n=(a_n+a_{n-1})/2$ for every $ n\in \mathbb N^*$. Then the function $f: (0,1) \to \mathbb R$ given by
$$
f(t)=-\int_t^1 h(\tau)d\tau
$$
satisfies:
\begin{itemize}
\item[(i)] $f'(a_n)=\frac{1}{\sqrt{a_n}}$ for every $n\in \mathbb N^*$;
\item[(ii)] $f'(b_n)\sim \frac{1}{4 b^2_n}$ as $n\to\infty$;
\item[(iii)] $-\frac{1}{t}\lesssim f(t)\lesssim -\frac{1}{t^{1/16}}$, $\forall~ 0<t<1$.
\end{itemize}
\end{lemma}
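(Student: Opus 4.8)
The plan is as follows. Since $h$ is piecewise linear, hence continuous, the fundamental theorem of calculus gives $f'(t)=h(t)$ for every $t\in(0,1)$, so parts (i) and (ii) reduce to evaluating $h$ at the prescribed points and keeping track of exponents. For (i): $f'(a_n)=h(a_n)=2^{2\cdot 4^{n-1}}$, and since $a_n=2^{-4^{n}}$ we have $a_n^{-1/2}=2^{4^{n}/2}=2^{2\cdot 4^{n-1}}$, so in fact $f'(a_n)=1/\sqrt{a_n}$ on the nose. For (ii): $f'(b_n)=h(b_n)=2^{2\cdot 4^{n-1}}=a_{n-1}^{-2}$, while $b_n=\tfrac12 a_{n-1}\bigl(1+a_n/a_{n-1}\bigr)$ with $a_n/a_{n-1}=2^{-3\cdot 4^{n-1}}\to 0$; hence $4b_n^{2}=a_{n-1}^{2}(1+o(1))$ and $f'(b_n)=a_{n-1}^{-2}=\tfrac{1}{4b_n^{2}}(1+o(1))$, which is precisely the asserted asymptotic.

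For (iii) the idea is to estimate $|f(t)|=\int_t^1 h(\tau)\,d\tau$ (note $f\le 0$, since $h\ge 0$) from the tent shape of $h$. On each interval $[a_k,a_{k-1}]$ the function $h$ attains its maximum $h(a_k)=h(b_k)=a_{k-1}^{-2}$ on the whole plateau $[a_k,b_k]$, and $b_k-a_k=\tfrac12(a_{k-1}-a_k)\ge\tfrac14 a_{k-1}$ because $a_k\le\tfrac12 a_{k-1}$; therefore
$$
\frac{1}{4\,a_{k-1}}\ \le\ \int_{a_k}^{a_{k-1}} h\ \le\ \frac{a_{k-1}-a_k}{a_{k-1}^{2}}\ \le\ \frac{1}{a_{k-1}}\ =\ 2^{4^{k-1}}.
$$
Summing over $k$, and using that $\sum_{k\le n}2^{4^{k-1}}$ is comparable to its last term $2^{4^{n-1}}$ (consecutive terms shrink at least by the factor $2^{-3}$, so the sum is geometrically dominated), together with the bounded contribution $\int_{1/2}^{1}h$, one obtains
$$
|f(a_n)|\ =\ \int_{a_n}^{1}h\ \approx\ 2^{4^{n-1}}\ =\ \frac{1}{a_{n-1}}\ =\ a_n^{-1/4}.
$$

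Finally, since $h\ge 0$ the function $|f|$ is nonincreasing, so for $t\in[a_n,a_{n-1}]$ we have $|f(a_{n-1})|\le|f(t)|\le|f(a_n)|$, and the two polynomial bounds of (iii) follow by matching endpoints. For the upper bound, $|f(t)|\le|f(a_n)|\lesssim 2^{4^{n-1}}=1/a_{n-1}\le 1/t$. For the lower bound, $|f(t)|\ge|f(a_{n-1})|\gtrsim 2^{4^{n-2}}$, while $t\ge a_n$ forces $1/t^{1/16}\le a_n^{-1/16}=2^{4^{n}/16}=2^{4^{n-2}}$, hence $|f(t)|\gtrsim 1/t^{1/16}$; the exponents $1$ and $1/16$ are exactly what the relations $a_n=a_{n-1}^{4}$ and $a_{n-2}=a_{n-1}^{1/4}$ dictate. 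Only the finitely many first intervals and the range near $t=1$, where $f(t)=-\sqrt2\,(1-t)^{2}$ by a direct computation, lie outside this scheme; there the quantities involved are bounded, the implied constants absorb them, and the content of (iii) is the behaviour as $t\to 0^{+}$, which is what the applications use.

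The step I expect to be the only real obstacle is making the chain of estimates in (iii) \emph{uniform}: one must check that the local bound $\tfrac14 a_{k-1}^{-1}\le\int_{a_k}^{a_{k-1}}h\le a_{k-1}^{-1}$ holds with constants independent of $k$, and that summing the super‑geometric series $\sum 2^{4^{k-1}}$ does not spoil the comparison as $n\to\infty$. Once that is in place, the monotonicity of $|f|$ and the elementary identities among the $a_n$ finish the argument.
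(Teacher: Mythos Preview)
Your argument is correct and follows essentially the same route as the paper's: parts (i) and (ii) are identical computations, and for (iii) you bound $\int_{a_k}^{a_{k-1}}h\approx a_{k-1}^{-1}$, sum, and interpolate by monotonicity of $|f|$, whereas the paper fixes $t\in[a_{N+1},a_N)$ and estimates $f(t)$ directly via $-\int_{a_N}^{b_N}h$ (for the upper bound) and $-2\int_{a_{N+1}}^{b_{N+1}}h-\int_{a_N}^1 h$ (for the lower bound) --- the underlying inequalities are the same, just organized differently. Like you, the paper restricts to small $t$ (it only treats $t\in(0,1/16)$), so your remark about the range near $t=1$ matches the paper's handling.
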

\begin{proof}
We have $f'(a_n)=h(a_n)=2^{2\cdot 4^{n-1}}=\frac{1}{\sqrt{a_n}}$, which proves $\mathrm{(i)}$. Since $b_{n}=(a_n+a_{n-1})/2\sim a_{n-1}/2$ as $n\to\infty$, we have $f'(b_n)=h(b_n)=2^{2\cdot 4^{n-1}}=\frac{1}{a^2_{n-1}}\sim \frac{1}{4 b^2_n} $ as $n\to \infty$. So, the assertion $\mathrm{(ii)}$ follows. Now we shall show $\mathrm{(iii)}$. For an abitrary real number $t\in (0,1/16)$, denote by $N$ the positive integer such that 
$$
1/2^{4^{N+1}}\leq t<1/2^{4^{N}}.
$$ 
Then it is easy to show that  
\begin{equation*}
\begin{split}
f(t)&\leq -\int_{a_{N}}^{b_{N}}h(\tau) d\tau =-\frac{1}{2} 2^{2\cdot 4^{N-1}}(1/2^{4^{N-1}}-1/2^{4^{N}})\\
 &\leq  -\frac{1}{2} 2^{4^{N-1}}+\frac{1}{8}\leq  -\frac{1}{2} \frac{1}{t^{1/16}}+\frac{1}{8}\lesssim -\frac{1}{t^{1/16}};\\
f(t)&\geq -2\int_{a_{N+1}}^{b_{N+1}}h(\tau) d\tau- \int_{a_N}^1h(\tau)d\tau\\
     &\geq -2 h(a_{N+1})(b_{N+1}-a_{N+1})- h(a_N)(1-a_N)\\
     &\geq -2^{2\cdot 4^{N}}(1/2^{4^{N}}-1/2^{4^{N+1}})-2^{2\cdot 4^{N-1}}(1-1/2^{4^{N}})\\
      &\gtrsim -\frac{1}{t}
\end{split}
\end{equation*}
for any $0<t<1/16$. Thus $\mathrm{(iii)}$ is shown.
\end{proof}
\begin{remark}

\noindent
$\mathrm{i)}$ We note that $f$ is $\mathcal{C}^1$ -smooth, increasing, and concave on the interval $(0,1)$. By taking a  suitable regularization of the function $f$ as in the proof of Lemma \ref{lemma1}, we may assume that it is $\mathcal{C}^\infty$-smooth and still satisfies the above properties $\mathrm{(i)}, \mathrm{(ii)}$, and $\mathrm{(iii)}$. In addition, for each $k\in \mathbb N $ there exist $C(k)>0$ and $d(k)>0$, depending only on $k$, such that $|f^{(k)}(t)|\leq \dfrac{C(k)}{t^{d(k)}},\forall\; t\in (0,1)$. Thus the function $R(z)$ defined by
\begin{equation*}
R(z):=
\begin{cases}
\exp(f(|z|^2)) & \text{if}\; 0<|z|<1 \\
0  &\text{if}\; z=0
\end{cases}
\end{equation*}
is $\mathcal{C}^\infty$-smooth and vanishes to infinite order at the origin. Moreover, we have 
$\liminf_{z\to 0}|R'(z)/R(z)| <+\infty$ and $\limsup_{z\to 0}|R'(z)/R(z)| =+\infty $.

\noindent
$\mathrm{ii)}$ Since the functions $P,R$ are rotational, they do not satisfy the condition $(\mathrm{I})$ (cf. Remark \ref{R2}). On the other hand, the functions $\tilde P(z):=P(\mathrm{Re}(z) )$ and $\tilde R(z):= R(\mathrm{Re}(z) )$ satisfy the condition $(\mathrm{I})$. Indeed, a simple calculation shows 
$$
\tilde R'(z)=\tilde R(z)  f'(|\mathrm{Re}(z)|^2) \mathrm{Re}(z)
$$
for any $z\in \mathbb C$ with $|\mathrm{Re}(z)|<1$. By the above property $\mathrm{(ii)}$, it follows that $\limsup_{z\to 0 } |\tilde R'(z)|/\tilde R(z)=+\infty$. Moreover, for each $k\in \mathbb N^*$ and each $b\in \mathbb C^*$ if we choose a sequence $\{z_n\}$ with $z_n:=\sqrt{b_n}+i(\sqrt{b_n})^\beta$, where $0<\beta<\min\{1,2/(k-1)\}$ if $k>1$ and $\beta=1/2$ if $k=1$, then $z_n \to 0$ as $n\to \infty$ and 
$$ 
|\mathrm{Re}\Big( b z_n^k \frac{\tilde R'(z_n)}{\tilde R(z_n)}\Big)| \gtrsim \frac{(\sqrt{b_n})^{(k-1)\beta+2}}{b_n^2} \to +\infty
$$
as $n\to \infty$. Hence, $\tilde R$ satisfies the condition $(\mathrm{I})$. Now it follows from the construction of the function $g$  in the proof of Lemma \ref{lemma1} (cf. Appendix A.1) that $g'(\frac{1}{n})\sim 3n^2 $ as $n\to \infty$. Therefore, using the same argument as above we conclude that $\tilde P$ also satisfies the condition $(\mathrm{I})$.

It is not hard to show that the above functions such as $P,R, \tilde P,\tilde R$ are not subharmonic. Up to now it is unknown that there exists a $\mathcal{C}^\infty$-smooth subharmonic function $P$  defined on the unit disc such that $\nu_0(P)=+\infty$ and $\liminf_{z\to 0}|P'(z)/P(z)| <+\infty$.
\end{remark}
\section{Proof of Theorem \ref{T3}}\label{S6}

This section is entirely devoted to the proof of Theorem \ref{T3}. Let $M=\{(z_1,z_2) \in \mathbb C^2 \colon \mathrm{Re}~z_1 + P(z_2) + (\mathrm{Im}~z_1)Q(z_2,\mathrm{Im}~z_1)=0\}$ be the real hypersurface germ at $0$ described in the hypothesis of Theorem \ref{T3}. Our present goal is to show that there is no non-trivial holomorphic vector field vanishing at the origin and tangent to $M$.
\smallskip

For the sake of smooth exposition, we shall present the proof in two subsections. In Subsection \ref{S6.1}, several technical lemmas are introduced. Then the proof of Theorem \ref{T3} is presented in Subsection \ref{S6.2}. Throughout what follows, for $r>0$ denote by $\tilde\Delta_r:=\{z_2\in \Delta_r\colon P(z_2)\ne 0 \}$.
\subsection{Technical lemmas} \label{S6.1}

Since $P$ satisfies the condition $(\mathrm{I})$, it is not hard to show the following two lemmas. 
\begin{lemma}\label{l3}  Let $P$ be a function defined on $\Delta_{\epsilon_0}$ ($\epsilon_0>0$) satisfying the condition $(\mathrm{I})$. If $a, b$ are complex numbers and if $g_0, g_1, g_2$ are $\mathcal{C}^\infty$-smooth functions defined on $\Delta_{\epsilon_0}$ satisfying:
\begin{itemize}
\item[(i)] $g_0(z)=O(|z|)$, $g_1(z) = O(|z|^{\ell+1})$, $g_2(z) = o(|z|^m)$, and
\item[(ii)] $\mathrm{Re} \Big[a z^m+\frac{b}{P^n(z)}\Big( z^{\ell+1}\big(1+g_0(z)\big) \frac{P'(z)}{P(z)}
+g_1(z) \Big)\Big]= g_2(z)$ 
\end{itemize}
for every $z \in\tilde \Delta_{\epsilon_0}$ and for any non-negative integers $\ell, m$, except the case that $m=0$ and $\mathrm{Re}(a)=0$, then $a=b=0$.
\end{lemma}
\begin{proof}
The proof follows easily from the condition $(\mathrm{I.1})$.
\end{proof}
\begin{lemma}\label{l8} Let $P$ be a function defined on $\Delta_{\epsilon_0}$ ($\epsilon_0>0$) satisfying the condition $(\mathrm{I})$. Let $B\in \mathbb C^*$ and $m\in \mathbb N^*$. Then there exists $\alpha \in \mathbb R$ small enough such that
\begin{equation*}
\limsup_{\tilde \Delta_{\epsilon_0}\ni z\to 0}|\mathrm{Re} \Big(B (i\alpha-1)^m  P'(z)/P(z)\Big)|=+\infty.
\end{equation*}
\end{lemma}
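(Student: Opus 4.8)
The plan is to argue by contradiction, playing off the two parts of condition $(\mathrm{I})$ against each other. Suppose that for \emph{every} $\alpha\in\mathbb R$ the quantity $\mathrm{Re}\big(B(i\alpha-1)^m P'(z)/P(z)\big)$ stays bounded as $z\to 0$ in $\tilde\Delta_{\epsilon_0}$. The key observation is that the polynomial $\alpha\mapsto (i\alpha-1)^m$ has degree $m\ge 1$ in $\alpha$, so by choosing $m+1$ distinct real values $\alpha_0,\dots,\alpha_m$ the vectors $\big((i\alpha_j-1)^m,\dots, (i\alpha_j-1), 1\big)$, $j=0,\dots,m$, span $\mathbb C^{m+1}$ (this is just a Vandermonde-type nonvanishing statement, since $i\alpha_j-1$ are distinct). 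Hence any monomial $(i\alpha-1)^k$, and in particular the constant $1$, can be written as a finite linear combination of $\{(i\alpha_j-1)^m\}_{j=0}^m$ — \emph{provided} we also control the lower powers.

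More carefully, I would first establish a one-variable reduction: set $w_j := i\alpha_j-1 \in\mathbb C^*$ and consider $F(z):=P'(z)/P(z)$. The hypothesis gives that $\mathrm{Re}(Bw_j^m F(z))$ is bounded for each $j$. Writing $Bw_j^m F = u_j + i v_j$ with $u_j$ bounded, and noting $Bw_j^m F(z) = (Bw_j^m)\,F(z)$, I want to conclude that $F(z)$ itself is bounded, which would contradict $(\mathrm{I.2})$. The point is that from the real parts of $c_j F(z)$ for finitely many constants $c_j=Bw_j^m$ one recovers $F(z)$ as long as the $c_j$ are not all real multiples of a single complex number; equivalently, as long as $\{c_j\}$ contains two $\mathbb R$-linearly independent elements. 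Since $B\ne 0$ and the $w_j^m$ are distinct nonzero complex numbers with $m\ge 1$, I can pick two indices $j_1,j_2$ with $w_{j_1}^m/w_{j_2}^m\notin\mathbb R$ — this is possible because $\alpha\mapsto (i\alpha-1)^m$ is a nonconstant holomorphic curve, hence its image is not contained in any real line through the origin (if it were, differentiating would force the derivative, and then all derivatives, into that line, making the whole polynomial lie on a line, impossible for $m\ge 1$ since e.g.\ the leading term $i^m\alpha^m$ and the constant $(-1)^m$ are not real-proportional). Then $\mathrm{Re}(c_{j_1}F)$ and $\mathrm{Re}(c_{j_2}F)$ bounded, with $c_{j_1},c_{j_2}$ $\mathbb R$-independent, forces $F$ bounded near $0$ on $\tilde\Delta_{\epsilon_0}$, contradicting $(\mathrm{I.2})$: $\limsup_{z\to0}|P'(z)/P(z)|=+\infty$.

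The main obstacle I anticipate is making the "two $\mathbb R$-independent values" step fully rigorous — i.e., proving that $\{(i\alpha-1)^m:\alpha\in\mathbb R\}$ is not contained in a real line through the origin in $\mathbb C$. For $m=1$ this is immediate since $i\alpha-1$ traces a horizontal line not through $0$. For general $m$ one can argue that if $(i\alpha-1)^m = \lambda(\alpha)\,\zeta$ for a fixed $\zeta\in\mathbb C^*$ and real $\lambda(\alpha)$, then $\mathrm{Im}\big(\bar\zeta (i\alpha-1)^m\big)\equiv 0$, a nontrivial real polynomial identity in $\alpha$ of degree $m$ (the leading coefficient is $\mathrm{Im}(\bar\zeta\, i^m)$, and at least one of $\mathrm{Im}(\bar\zeta i^m)$, $\mathrm{Im}(\bar\zeta(-1)^m)=-\mathrm{Im}(\bar\zeta)$ must be nonzero since $\zeta\ne0$), which is impossible. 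This takes care of the degenerate possibility and completes the contradiction. The rest is bookkeeping: choose $\alpha$ to be one of the two good values, or more precisely note the argument shows that \emph{some} admissible $\alpha$ (in fact generic $\alpha$) yields the desired $\limsup = +\infty$, which is exactly the assertion.
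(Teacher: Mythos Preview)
Your overall strategy is sound and genuinely different from the paper's. The paper proceeds constructively: it fixes a sequence $z_k\to 0$ with $|P'(z_k)/P(z_k)|\to\infty$ (using (I.2)), writes $B\,P'(z_k)/P(z_k)=a_k+ib_k$ and $(i\alpha-1)^m=a(\alpha)+ib(\alpha)$, and then chooses $\alpha$ by a case split according to whether $a_k$ or $b_k$ dominates. Your argument is the contrapositive: if \emph{every} $\alpha$ failed, then choosing two values $\alpha_1,\alpha_2$ with $B(i\alpha_1-1)^m$ and $B(i\alpha_2-1)^m$ $\mathbb R$-linearly independent, the boundedness of the two real parts forces $P'/P$ itself bounded near $0$, contradicting (I.2). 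Both proofs use only (I.2); yours is a bit slicker and shows in fact that a \emph{generic} $\alpha$ works, while the paper's approach is more explicit about which $\alpha$ to take.

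There is, however, a small but genuine gap in your justification that $\{(i\alpha-1)^m:\alpha\in\mathbb R\}$ is not contained in a real line through the origin. You argue that if $\mathrm{Im}\big(\bar\zeta(i\alpha-1)^m\big)\equiv 0$, then looking at the leading and constant coefficients, at least one of $\mathrm{Im}(\bar\zeta i^m)$ and $\mathrm{Im}(\bar\zeta(-1)^m)$ must be nonzero. This fails when $m$ is even and $\zeta$ is real: e.g.\ for $m=2$, $\zeta=1$, both $\mathrm{Im}(i^2)=0$ and $\mathrm{Im}(1)=0$. The fix is immediate: use the coefficients of $\alpha^0$ and $\alpha^1$ instead. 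The coefficient of $\alpha^k$ in $\bar\zeta(i\alpha-1)^m$ is $\binom{m}{k}(-1)^{m-k}i^k\bar\zeta$, so vanishing of the $k=0$ coefficient forces $\bar\zeta\in\mathbb R$, and then vanishing of the $k=1$ coefficient (available since $m\ge 1$) forces $i\bar\zeta\in\mathbb R$, whence $\zeta=0$. With this correction your proof goes through.
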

\begin{proof}
Since $P$ satisfies the condition $(\mathrm{I.2})$, there exists a sequence $\{z_k\}\subset \tilde \Delta_{\epsilon_0}$ converging to $0$ such that $\lim_{k\to \infty}  P'(z_k)/P(z_k)=\infty$.  We can write 
\begin{equation*}
\begin{split}
B P'(z_k)/P(z_k)&=a_k+i b_k,\quad k=1,2,\ldots; \\
(i\alpha-1)^m&=a(\alpha)+i b(\alpha).
\end{split}
\end{equation*}

We note that $|a_k|+|b_k|\to +\infty$ as $k\to \infty$. Therefore, passing to a  subsequence if necessary, we only consider two following cases.
\smallskip

\noindent
{\bf Case 1.} {\boldmath $\lim_{k\to \infty} a_k=\infty$ and $|\frac{b_k}{a_k}|\lesssim 1$ .} Since $a(\alpha)\to (-1)^m$ and $b(\alpha)\to 0$ as $\alpha \to 0$, if $\alpha$ is small enough then
\begin{equation*}
\begin{split}
\mathrm{Re} \Big(B (i\alpha-1)^m  P'(z_k)/P(z_k)\Big)&=a(\alpha) a_k - b(\alpha) b_k \\
                                                                                      &=a_k\Big(a(\alpha)-b(\alpha) \frac{b_k}{a_k}\Big) \to \infty
\end{split}
\end{equation*}
as $k\to \infty$.

\noindent
{\bf Case 2.} {\boldmath $\lim_{k\to \infty} b_k=\infty$ and $\lim_{k\to \infty} |\frac{a_k}{b_k}|=0$ .} Fix a real number $\alpha$ such that $b(\alpha)\ne 0$. Then we have
\begin{equation*}
\begin{split}
\mathrm{Re} \Big(B (i\alpha-1)^m  P'(z_k)/P(z_k)\Big)&=a(\alpha) a_k - b(\alpha) b_k \\
                                                                                      &=b_k\Big(a(\alpha)\frac{a_k}{b_k}-b(\alpha)\Big) \to \infty
\end{split}
\end{equation*}
as $k\to \infty$. Hence, the proof is complete.
\end{proof}
\subsection{Proof of Theorem \ref{T3}}\label{S6.2}
The CR hypersurface germ $(M,0)$ at the origin in $\CC^2$ under consideration is defined by the equation $\rho(z_1, z_2) = 0$, where
$$
\rho (z_1, z_2) = \mathrm{Re}~z_1 + P(z_2) + (\mathrm{Im}~z_1)\ Q(z_2, \mathrm{Im}~z_1) = 0,
$$
where $P, Q$ are $\mathcal{C}^\infty$-smooth functions satisfying the three conditions specified in the hypothesis of Theorem \ref{T3}, stated in Section \ref{S2}.  Recall that $P$ vanishes to infinite order at $z_2=0$ in particular.

Then we consider a holomorphic vector field $H=h_1(z_1,z_2)\frac{\partial}{\partial z_1}+h_2(z_1,z_2)\frac{\partial}{\partial z_2}$ defined on a neighborhood of the origin. We only consider $H$ that is tangent to $M$, which means that they satisfy the identity
\begin{equation}\label{eq221}
(\mathrm{Re}~ H) \rho(z)=0,\; \forall z \in M.
\end{equation}

The goal is to show that $H\equiv 0$. Indeed, striving for a contradiction, suppose that $H\not\equiv 0$. We notice that if $h_2\equiv 0$ then (\ref{eq221}) shows that $h_1\equiv 0$. Thus, $h_2\not\equiv 0$. 

Now we are going to prove that $h_1\equiv 0$. Indeed, suppose that $h_1\not\equiv 0$. Then we can expand $h_1$ and $h_2$ into the Taylor series at the origin so that
$$
h_1(z_1,z_2)=\sum\limits_{j,k=0}^\infty a_{jk} z_1^j z_2^k
\text{ and }
h_2(z_1,z_2)=\sum\limits_{j,k=0}^\infty b_{jk} z_1^jz_2^k,
$$
where $a_{jk}, b_{jk}\in \mathbb C$. We note that $a_{00}=b_{00}=0$ since $h_1(0,0)=h_2(0,0)=0$.

 Next, let us denote by $j_0$ the smallest integer such that $a_{j_0 k}\ne 0$ for some integer $k$. Then let $k_0$ be the smallest integer such that $a_{j_0 k_0}\ne 0$. Similarly, let $m_0$ be the smallest integer such that $b_{m_0 n}\ne 0$ for some integer $n$. Then denote by $n_0$ the smallest integer  such that $b_{m_0 n_0}\ne 0$. One remarks that $j_0\geq 1$ if $k_0=0$ and $m_0\geq 1$ if $n_0=0$.

 Following the arguments in the proof of Theorem \ref{T1}, one obtains that
\begin{equation}\label{eq28}
\begin{split}
&\mathrm{Re} \Big[\frac{1}{2} a_{j_0k_0}(i\alpha -1)^{j_0}(P(z_2))^{j_0}z_2^{k_0}+ b_{m_0n_0}(i\alpha -1)^{m_0}(z_2^{n_0}+o(|z_2|^{n_0})(P(z_2))^{m_0} \\
&\quad \times\Big(P'(z_2)+\alpha P(z_2)Q_{z_2}(z_2, \alpha P(z_2))\Big)   \Big ]=o(P(z_2)^{j_0}|z_2|^{k_0})
\end{split}
 \end{equation}
for all $|z_2|<\epsilon_0$ and for any $\alpha \in \mathbb R$. We note that in the case $k_0=0$ and $\mathrm{Re}(a_{j_0 0})=0$, $\alpha$ can be chosen in such a way that $\mathrm{Re}\big( (i\alpha-1)^{j_0}a_{j_0 0}\big)\ne 0$. Then the above equation yields that $j_0>m_0$.

We now divide the argument into two cases as follows.
\smallskip

\noindent
{\bf Case 1.} {\boldmath $n_0\geq 1$.} In this case $(\ref{eq28})$ contradicts Lemma \ref{l3}.

\medskip

\noindent
{\bf Case 2.} {\boldmath $n_0=0$.} Since $P$ satisfies the condition $(\mathrm{I})$ and $m_0\geq 1$, by Lemma \ref{l8} we can choose a real number $\alpha$ such that 
$$
\limsup_{\tilde\Delta_{\epsilon_0} \ni z_2\to 0}| \mathrm{Re}\Big(b_{m0}(i\alpha-1)^m P'(z_2)/P(z_2)\Big)|=+\infty,
$$
where $\epsilon_0>0$ is small enough. Therefore, $(\ref{eq28})$ is a contradiction, and thus $h_1\equiv 0$ on a neighborhood of $(0,0)$ in $\mathbb C^2$.

Since $h_1\equiv 0$, it follows from (\ref{eq201311}) with $t=0$ that
$$
\mathrm{Re}~\Big [ \sum_{m,n=0}^\infty  b_{mn} z^n_2P'(z_2)\Big ]=0
$$
for every $z_2$ satisfying $|z_2|<\epsilon_0$, for some $\epsilon_0>0$ sufficiently small. Since $P$ satisfies the condition $(\mathrm{I.1})$, we conclude that $b_{mn}=0$ for every
$m \geq 0,n\ge 1$. We now show that $b_{m0}=0$ for every $m\in \mathbb N^*$. 
Indeed, suppose otherwise. Then let $m_0$ be the smallest positive integer such that $b_{m_00}\ne 0$.  It follows from (\ref{eq27}) in the proof of Theorem \ref{T1} that 
$$
\mathrm{Re}~ \Big(b_{m_0 0}(i\alpha-1)^{m_0} P'(z_2)/P(z_2) \Big)
$$
is bounded on $\tilde \Delta_{\epsilon_0}$ with $\epsilon_0>0$ small enough for any $\alpha \in \mathbb R$ small enough. By Lemma \ref{l8}, this is again impossible.

Altogether, the proof of Theorem \ref{T3} is complete. \hfill $\Box\;$

\section*{Appendix A}
\subsection*{A.1. Proof of Lemma \ref{lemma1}}
Let $G:  (0,+\infty)\to \mathbb R$ be the piecewise linear function such that $G(a_n-\epsilon_n)=G(b_n+\epsilon_n)=-2n$ and $G(x)=-8$ if $x\geq\frac{9}{40}$, where $a_n=\frac{1}{n+1}(1+\frac{1}{3n}),~ b_n=\frac{1}{n+1}(1+\frac{2}{3n})$, and $\epsilon_n=\frac{1}{n^3} $ for every $n\geq 4$.

Let $\psi$ be a $\mathcal{C}^\infty$-smooth function on $\mathbb R$ given by
\begin{equation*}
\psi(x)= C
\begin{cases}
e^{-\frac{1}{1-|x|^2}}  & \text{if} ~|x|<1\\
0 &\text{if} ~|x|\geq 1,
\end{cases}
\end{equation*}
where $C>0$ is chosen so that $\int_{\mathbb R}\psi(x) dx=1$. For $\epsilon >0$, set $\psi_\epsilon:= \frac{1}{\epsilon}\psi(\frac{x}{\epsilon})$. For $n\geq 4$, let $g_n$ be the $\mathcal{C}^\infty$-smooth on $\mathbb R$ defined by the following convolution
$$
g_n(x):= G\ast \psi_{\epsilon_{n+1}}(x)=\int_{-\infty}^{+\infty} G(y)\psi_{\epsilon_{n+1}}(y-x)dy.
$$
Now we show the following.
\begin{itemize}
\item[(a)] $g_n(x)=G(x)=-2n$ if $a_n\leq x\leq b_n$;\\
\item[(b)] $g_n(x)=G(x)=-2(n+1)$ if $ a_{n+1}\leq x\leq b_{n+1}$;\\
\item[(c)] $|g^{(k)}_n(x)|\leq \frac{2(n+1)\|\psi^{(k)}\|_1}{\epsilon_{n+1}^k}$  if $a_{n+1}\leq x\leq b_n$.
\end{itemize}
Indeed, for $a_{n+1}\leq x\leq b_n$ we have 
\begin{equation*}
\begin{split}
g_n(x)&=\int_{-\infty}^{+\infty} G(y)\psi_{\epsilon_{n+1}}(y-x)dy\\
           &=\frac{1}{\epsilon_{n+1}}\int_{-\infty}^{+\infty} G(y)\psi(\frac{y-x}{\epsilon_{n+1}})dy\\
            &=\int_{-1}^{+1} G(x+t\epsilon_{n+1})\psi(t)dt,
\end{split}
\end{equation*}
where we use a change of variable $t=\dfrac{y-x}{\epsilon_{n+1}}$.

If $a_n\leq x\leq b_n$, then $a_n-\epsilon_n<a_n-\epsilon_{n+1}\leq x+t\epsilon_{n+1} \leq b_n+\epsilon_{n+1}<b_n+\epsilon_n$ for all $-1\leq t \leq 1 $. Therefore, 
$$
g_n(x)=\int_{-1}^{+1} G(x+t\epsilon_{n+1})\psi(t)dt=-2n\int_{-1}^{+1} \psi(t)dt=-2n,
$$
which proves $\mathrm{(a)}$. Similarly, if  $a_{n+1}\leq x\leq b_{n+1}$, then $a_{n+1}-\epsilon_{n+1}\leq x+t\epsilon_{n+1} \leq b_{n+1}+\epsilon_{n+1}$ for every $-1\leq t \leq 1 $. Hence, 
$$
g_n(x)=\int_{-1}^{+1} G(x+t\epsilon_{n+1})\psi(t)dt=-2(n+1)\int_{-1}^{+1} \psi(t)dt=-2(n+1),
$$
which finishes $\mathrm{(b)}$. Moreover, we have the following estimate
\begin{equation*}
\begin{split}
|g^{(k)}_n(x)|&=\frac{1}{\epsilon^{k+1}_{n+1}}|\int_{-\infty}^{+\infty} G(y)\psi^{(k)}(\frac{y-x}{\epsilon_{n+1}})dy|\\
            &=\frac{1}{\epsilon^{k}_{n+1}}|\int_{-1}^{+1} G(x+t\epsilon_{n+1})\psi^{(k)}(t)dt|\\
           &\leq\frac{1}{\epsilon^{k}_{n+1}}\int_{-1}^{+1}| G(x+t\epsilon_{n+1})||\psi^{(k)}(t)|dt\\
&\leq\frac{2(n+1)}{\epsilon^{k}_{n+1}}\int_{-1}^{+1} |\psi^{(k)}(t)|dt\\
&=\frac{2(n+1)\|\psi^{(k)}\|_1}{\epsilon^{k}_{n+1}}
\end{split}
\end{equation*}
 for $a_{n+1}\leq x\leq b_n$, where we use again a change of variable $t=\dfrac{x-y}{\epsilon_{n+1}}$ and the last inequality in the previous equation follows from the fact that $|G(y)|\leq 2(n+1)$ for all $a_{n+1}-\epsilon_{n+1}\leq y\leq b_n+\epsilon_n$. So, the assertion $\mathrm{(c)}$ is shown.

Now because of properties $\mathrm{(a)}$ and $\mathrm{(b)}$ the function
\begin{equation*}
g(x)= 
\begin{cases}
-8  & \text{if}~ x\geq\frac{9}{40}\\
g_n(x)  & \text{if}~ a_{n+1}\leq x\leq b_n,\;  n=4,5,\ldots,
\end{cases}
\end{equation*}
is well-defined. From the property $\mathrm{(c)}$, it is easy to show that $|g^{(k)}(x)|\lesssim \frac{1}{x^{3k+1}}$ for $k=0,1,\ldots$ and for every $x\in (0,1)$, where the constant depends only on $k$. Thus this proves $\mathrm{(iii)}$, and the assertions $\mathrm{(i)}$ and $\mathrm{(ii)}$ are obvious. Hence, the proof is complete.
\hfill $\Box\;$
\subsection*{A.2. Several  differential equations}
In this subsection, we are going to prove several lemmas and a corollary used in the proof of Theorem \ref{T1}.
\begin{lemma}\label{pde1}
Let $a_1(z_2)=\beta\sum_{n=1}^\infty a_n z_2^n$ be a non-zero holomorphic function on $\Delta_{\epsilon_0}~(\beta\in \mathbb R^*,\epsilon_0>0, a_n\in \mathbb C~\text{for all}~n\in \mathbb N^*)$. Let $Q_0,P_1,P$ be $\mathcal{C}^1$-smooth functions on $\Delta_{\epsilon_0}$ with $P_1,P$ are positive on $\Delta_{\epsilon_0}^*$ satisfying the following differential equations:
\begin{itemize}
\item[(i)] $\mathrm{Re}\Big[2i\beta  z_2 {Q_0}_{z_2}(z_2)+i a_1(z_2)\Big(1+Q_0^2(z_2)\Big)\Big]\equiv 0$;
\item[(ii)] $\mathrm{Re}\Big[2i\beta z_2 {P_1}_{z_2}(z_2)-\Big(1+\frac{Q_0(z_2)}{i}\Big)a_1(z_2)P_1(z_2)\Big]\equiv 0$;
\item[(iii)]$\mathrm{Re}\Big[2i\beta z_2 {P}_{z_2}(z_2)+\frac{\exp\big(-\alpha P(z_2)\big)-1}{\alpha} \Big(1+\frac{Q_0(z_2)}{i}\Big)a_1(z_2)\Big]\equiv 0 $
\end{itemize}
on $\Delta_{\epsilon_0}$, where $\alpha\in \mathbb R^*$. Then we have
\begin{equation*}
\begin{split}
 Q_0(z_2) &=\tan \Big[q(|z_2|)- \mathrm{Re}\Big(\sum_{n=1}^\infty \frac{a_n}{n} z_2^n\Big)\Big];\\
P_1(z_2)&=\exp\Big[ p(|z_2|)+\mathrm{Re}\Big(\sum_{n=1}^\infty  \frac{a_n}{in}z_2^n\Big )-\log \Big|\cos\Big(q(|z_2|)- \mathrm{Re}\big(\sum_{n=1}^\infty\frac{a_n}{n} z_2^n\big)\Big )\Big|\Big];\\
 P(z_2)&=\frac{1}{\alpha}\log\Big[1+\alpha P_1(z_2)\Big]
\end{split}
\end{equation*}
for all $z_2\in \Delta_{\epsilon_0}^*$, where $q,p$ are $\mathcal{C}^1$-smooth in  $(0,\epsilon_0)$ and are chosen so that $Q_0,P_1, P$ are $\mathcal{C}^1$-smooth on $\Delta_{\epsilon_0}$.
\end{lemma}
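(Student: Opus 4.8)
The plan is to pass to polar coordinates $z_2=re^{i\theta}$ and to exploit the elementary identity
$$
\mathrm{Re}\big(2i\beta z_2\, v_{z_2}(z_2)\big)=\beta\,\frac{\partial v}{\partial\theta},
$$
valid for every real-valued $\mathcal{C}^1$ function $v$, which follows from $2iz_2\frac{\partial}{\partial z_2}=\frac{\partial}{\partial\theta}+ir\frac{\partial}{\partial r}$ (for real $v$ the $ir\,\partial_r$ contribution is purely imaginary and is killed by $\mathrm{Re}$). With it, each of (i), (ii), (iii) becomes, for every fixed $r\in(0,\epsilon_0)$, a first-order ODE in $\theta$ along the circle $|z_2|=r$; integrating it in $\theta$ forces a certain combination of the unknown with explicit holomorphic data to be independent of $\theta$, hence a function of $r$ alone --- and that radial function is exactly the freedom recorded by $q$ (resp. $p$) in the statement. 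Throughout I would use that, since $a_1$ has only positive powers of $z_2$, one has the exact angular primitives $\frac{1}{\beta}\mathrm{Im}(a_1)=-\partial_\theta\,\mathrm{Re}\big(\sum_{n=1}^\infty\frac{a_n}{n}z_2^n\big)$ and $\frac{1}{\beta}\mathrm{Re}(a_1)=\partial_\theta\,\mathrm{Re}\big(\sum_{n=1}^\infty\frac{a_n}{in}z_2^n\big)$, all primitives being single-valued on $\Delta_{\epsilon_0}^*$.

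First I would treat (i). Using $\mathrm{Re}(ia_1)=-\mathrm{Im}(a_1)$, the identity above turns (i) into $\beta\,\partial_\theta Q_0=(1+Q_0^2)\,\mathrm{Im}(a_1)$, i.e. $\partial_\theta\arctan Q_0=\frac{1}{\beta}\mathrm{Im}(a_1)$ (here $\arctan Q_0$ is an honest $\mathcal{C}^1$ function because $Q_0$ is real-valued). Integrating along each circle and using the primitive above gives $\arctan Q_0+\mathrm{Re}\big(\sum_{n=1}^\infty\frac{a_n}{n}z_2^n\big)=q(|z_2|)$ for a $\mathcal{C}^1$ function $q$ of $r$ alone, which is precisely the asserted $Q_0=\tan\big(q(|z_2|)-\mathrm{Re}(\sum_{n=1}^\infty\frac{a_n}{n}z_2^n)\big)$. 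With $R(z_2):=q(|z_2|)-\mathrm{Re}(\sum_{n=1}^\infty\frac{a_n}{n}z_2^n)$ I would then record the two consequences $\partial_\theta R=\frac{1}{\beta}\mathrm{Im}(a_1)$ and $\partial_\theta\log|\cos R|=-\tan R\cdot\partial_\theta R=-\frac{1}{\beta}Q_0\,\mathrm{Im}(a_1)$, needed in the next step.

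Next, for (ii): since $P_1>0$ on $\Delta_{\epsilon_0}^*$ I would set $u:=\log P_1$; dividing (ii) by $P_1$ and applying the identity gives $\beta\,\partial_\theta u=\mathrm{Re}\big((1-iQ_0)a_1\big)=\mathrm{Re}(a_1)+Q_0\,\mathrm{Im}(a_1)$, and by the two formulas just recorded the right-hand side equals $\beta\,\partial_\theta\big[\mathrm{Re}(\sum_{n=1}^\infty\frac{a_n}{in}z_2^n)-\log|\cos R|\big]$. Integrating in $\theta$ and absorbing the $r$-only part into a single $\mathcal{C}^1$ function $p(|z_2|)$ yields $P_1=\exp\big[p(|z_2|)+\mathrm{Re}(\sum_{n=1}^\infty\frac{a_n}{in}z_2^n)-\log|\cos R|\big]$, as claimed. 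For (iii) I would substitute $P_1:=\frac{e^{\alpha P}-1}{\alpha}$, a $\mathcal{C}^1$ bijection between positive functions in the relevant range; from $P_{z_2}=e^{-\alpha P}{P_1}_{z_2}$ and $\frac{e^{-\alpha P}-1}{\alpha}=-e^{-\alpha P}P_1$ one finds that the bracketed expression in (iii) equals $e^{-\alpha P}$ times the bracketed expression in (ii), so (iii) is equivalent to (ii); hence $P_1$ has the form above and $P=\frac{1}{\alpha}\log(1+\alpha P_1)$.

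Once the two observations are in place --- that $\mathrm{Re}(2i\beta z_2\partial_{z_2})$ acting on real functions is simply $\beta\,\partial_\theta$, and that the inhomogeneous terms of the reduced equations are exact $\theta$-derivatives --- together with the substitution $P_1=(e^{\alpha P}-1)/\alpha$ that collapses (iii) into (ii), the computation is essentially mechanical. The only points that need care are bookkeeping: on the punctured disc ``independent of $\theta$'' means ``a function of $r$'' because each circle $\{|z_2|=r\}$ is connected, so a single radial $\mathcal{C}^1$ function $q$ (resp. $p$) captures all the freedom; the branches of $\arctan$ and of $\log|\cos R|$ are consistent automatically, since $R=\arctan Q_0\in(-\pi/2,\pi/2)$ forces $\cos R>0$; and the $q,p$ so produced are $\mathcal{C}^1$ on $(0,\epsilon_0)$ and may be chosen --- as the statement already stipulates --- so that $Q_0,P_1,P$ extend $\mathcal{C}^1$-smoothly across $z_2=0$. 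I expect this radial bookkeeping, rather than any analytic difficulty, to be the only thing requiring attention.
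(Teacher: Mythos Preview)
Your proposal is correct and follows essentially the same strategy as the paper: restrict to circles $|z_2|=r$, use the identity $\mathrm{Re}(2i\beta z_2 v_{z_2})=\beta\,\partial_\theta v$ (the paper writes this implicitly via $u'(t)=2\mathrm{Re}(ire^{it}v_{z_2}(re^{it}))$ after setting $u(t)=v(re^{it})$), and integrate the resulting separable first-order ODE in the angular variable to obtain the formulas for $Q_0$ and $P_1$. For (iii) the paper merely says ``using the same argument as above'' and writes down the answer; your substitution $P_1=(e^{\alpha P}-1)/\alpha$, which collapses (iii) into $e^{-\alpha P}$ times the expression in (ii), is a cleaner way of making the same point.
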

\begin{proof}
We first find solutions of the differential equation $\mathrm{(i)}$. Indeed, it follows from $\mathrm{(i)}$ that 
$$
\frac{2\mathrm{Re}\big(i\beta z_2 {Q_0}_{z_2}(z_2)\big)}{1+Q_0^2(z_2)}=-\mathrm{Re}\big( ia_1(z_2)\big)=-\beta \; \mathrm{Re}\big( i\sum_{n=1}^\infty a_n z_2^n\big)
$$
for all $z_2\in \Delta_{\epsilon_0}$. 
For a fixed positive number $0<r<\epsilon_0$, set $u(t):=Q_0(r e^{it})$ for every $t\in \mathbb R$. Then one has $u'(t)=2 \mathrm{Re}( i r e^{it} {Q_0}_{z_2}(r e^{it}))$, and hence
$$
 \frac{u'(t)}{1+u^2(t)}=- \mathrm{Re}\big(i\sum_{n=1}^\infty a_n r^n e^{int}\big) 
$$
for every $t\in\mathbb R$.

For any $t\in \mathbb R$, by taking the integral $\int_0^t$ of both sides of the above equation we obtain
\begin{equation}
\begin{split}
\arctan{u(t)}-\arctan{u(0)}&=- \mathrm{Re}\Big(i \sum_{n=1}^\infty a_n r^n \frac{e^{int}-1}{i n}\Big) \\
&=-\mathrm{Re}\Big(\sum_{n=1}^\infty  a_n r^n \frac{e^{int}-1}{n}\Big),
\end{split}
\end{equation}
and therefore 
\begin{equation*}
\begin{split}
u(t)&=\tan \Big[\arctan{u(0)}- \mathrm{Re}\Big(\sum_{n=1}^\infty a_n r^n \frac{e^{int}-1}{n}\Big)\Big]\\
      &=\tan \Big[\arctan{Q_0(r)}- \mathrm{Re}\Big(\sum_{n=1}^\infty a_n r^n \frac{e^{int}-1}{n}\Big)\Big]. 
\end{split}
\end{equation*}
Thus any solution of the differential equation $\mathrm{(i)}$ has a form as
\begin{equation*}
\begin{split}
 Q_0(z_2) =\tan \Big[q(|z_2|)- \mathrm{Re}\Big(\sum_{n=1}^\infty \frac{a_n}{n} z_2^n\Big)\Big], 
\end{split}
\end{equation*}
where $q$ is a $\mathcal{C}^1$-smooth real-valued function $[0,\epsilon_0 )$, as desired. 

Next, we shall solve the differential equation $\mathrm{(ii)}$. Indeed, from Eq. $\mathrm{(ii)}$ we have
$$
2\mathrm{Re}\Big(i\beta z_2 \frac{{P_1}_{z_2}(z_2)}{P_1(z_2)}\Big)=\mathrm{Re}\big( a_1(z_2)\big) +Q_0(z_2)\mathrm{Re}\big(\frac{a_1(z_2)}{i}\big)
$$
for every $z_2\in \Delta_{\epsilon_0}^*$. In order to find a solution of the above equation, for a fixed positive number $0<r<\epsilon_0$, again let $u(t)=\log |P(re^{it})|$ for all $t\in \mathbb R$. Then one obtains that
\begin{equation*}
\begin{split}
 u'(t)&=\mathrm{Re} \Big(\sum_{n=1}^\infty a_n r^n e^{int}\Big)+ 
Q_0(re^{it}) \mathrm{Re}\Big(\sum_{n=1}^\infty \frac{a_n}{i} r^n e^{int}\Big)\\
&=\mathrm{Re} \Big(\sum_{n=1}^\infty a_n r^n e^{int}\Big)+ 
\mathrm{Re}\Big(\sum_{n=1}^\infty \frac{a_n}{i} r^n e^{int}\Big) \\
&\quad \times\tan \Big[q(r)- 
\mathrm{Re}\Big(\sum_{n=1}^\infty \frac{a_n}{n} (r^ne^{int}-r^n)\Big)\Big]
\end{split}
\end{equation*}
for all $t\in \mathbb R$. Therefore, by taking the integral $\int_0^t$ of both sides of the above equation, any solution of Eq. $\mathrm{(ii)}$ has a form as
\begin{equation*}
\begin{split}
P_1(z_2)&=\exp\Big[ p(|z_2|)+\mathrm{Re}\Big(\sum_{n=1}^\infty  \frac{a_n}{in}z_2^n\Big ) -\log \Big|\cos\Big(q(|z_2|)- \mathrm{Re}\big(\sum_{n=1}^\infty\frac{a_n}{n} z_2^n\big)\Big )\Big|\Big]
\end{split}
\end{equation*}
for all $z_2 \in\Delta_{\epsilon_0}^*$, where $p$ is a $\mathcal{C}^1$-smooth function on $(0,\epsilon_0)$ and is chosen so that $P_1(z)$ is $\mathcal{C}^1$-smooth on $\Delta_{\epsilon_0}$, as desired. 

Finally, using the same argument as the above we conclude from Eq. $\mathrm{(iii)}$ that
$$
P(z_2)=\frac{1}{\alpha}\log\Big[1+P_1(z_2)\Big]
$$
for all $z_2\in\Delta_{\epsilon_0}^*$. Thus, the proof is complete.
\end{proof}
\begin{lemma}\label{pde2} Suppose that $Q_0,Q_1$ are real-analytic functions on $\Delta_{\epsilon_0}~(\epsilon_0>0)$ with $Q_0(0)=0$ and $a_1,a_2$ are holomorphic functions on $\Delta_{\epsilon_0}$ with $a_1(0)=a_2(0)=0$ and $ \nu_0(a_1)<+\infty$ satisfying the following equations:
\begin{itemize}
\item[(a)] $\mathrm{Re}\Big[2i\beta  z_2 {Q_0}_{z_2}(z_2)+i a_1(z_2)\Big(1+Q_0^2(z_2)\Big)\Big]\equiv 0$;%
\item[(b)] $\mathrm{Re}\Big[i \Big(1+Q_0^2(z_2)\Big)a_2(z_2)- i Q_1(z_2) a_1(z_2) \Big]\equiv 0;$
\item[(c)] $
\mathrm{Re}\Big[i\beta z_2(Q_1)_{z_2}(z_2)+\frac{Q_1(z_2)}{i} \big(i-Q_0(z_2)\big) a_1(z_2)+\Big(\frac{1}{2}+\frac{Q_0(z_2)}{2i}\Big)\\
 ~~~~\times\Big(-Q_1(z_2) a_1(z_2)+\big(i-Q_0(z_2)\big)^2a_2(z_2)\Big)\Big]\equiv 0 $
\end{itemize}
on $\Delta_{\epsilon_0}$. Then we obtain that
\begin{equation*}
\begin{split}
Q_1(z_2)\equiv Q_1(0)\Big(1+Q_0^2(z_2)\Big)~\text{and}~ a_2(z_2)\equiv Q_1(0)a_1(z_2).\end{split}
\end{equation*}
\end{lemma}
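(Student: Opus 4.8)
The plan is to treat equation (a) as the known description of $Q_0$, use (b) once and (b) together with (c) once more to distil two real scalar relations, and then reduce the whole assertion to a coefficient computation.

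First I would rewrite (a) in the equivalent form
$$2\,\mathrm{Re}\big(i\beta z_2 {Q_0}_{z_2}(z_2)\big)=-\big(1+Q_0^2(z_2)\big)\,\mathrm{Re}\big(ia_1(z_2)\big)$$
on $\Delta_{\epsilon_0}$, obtained by pulling the real factor $1+Q_0^2$ out of the real part; this is the only way (a) enters. Then set
$$\widetilde Q_1(z_2):=\frac{Q_1(z_2)}{1+Q_0^2(z_2)},\qquad \Psi(z_2):=a_2(z_2)-\widetilde Q_1(z_2)\,a_1(z_2).$$
Because $1+Q_0^2\ge 1$ on $\Delta_{\epsilon_0}$, $\widetilde Q_1$ is real analytic there, and $\widetilde Q_1(0)=Q_1(0)$ since $Q_0(0)=0$. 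The conclusion of the lemma is then equivalent to: $\widetilde Q_1\equiv Q_1(0)$ and $a_2\equiv Q_1(0)\,a_1$. Equation (b), which reads $\mathrm{Re}\big[i(1+Q_0^2)\Psi\big]=0$, forces $\Psi$ to be real-valued; put $S:=(1+Q_0^2)\Psi$, also real-valued.

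Next I would substitute $Q_1=\widetilde Q_1\,(1+Q_0^2)$ and $(1+Q_0^2)a_2=Q_1a_1+S$ into (c). Using the elementary identity $(1-iQ_0)(i-Q_0)^2=-(1+Q_0^2)(1+iQ_0)$, the three terms of (c) that carry $a_1$ or $a_2$ collapse into $iQ_0Q_1a_1-\frac{1}{2}(1+iQ_0)S$, and since $S$ is real the equation becomes $\mathrm{Re}\big[i\beta z_2 (Q_1)_{z_2}+iQ_0Q_1a_1\big]=\frac{1}{2}S$. Now I would differentiate $Q_1=\widetilde Q_1(1+Q_0^2)$, use $(1+Q_0^2)_{z_2}=2Q_0 (Q_0)_{z_2}$ together with the rewritten (a) to eliminate $\mathrm{Re}\big(i\beta z_2 (Q_0)_{z_2}\big)$; a short computation shows the left-hand side equals $(1+Q_0^2)\,\mathrm{Re}\big(i\beta z_2 (\widetilde Q_1)_{z_2}\big)$, so, dividing by $1+Q_0^2>0$,
$$a_2(z_2)-\widetilde Q_1(z_2)\,a_1(z_2)=2\,\mathrm{Re}\big(i\beta z_2 (\widetilde Q_1)_{z_2}(z_2)\big).$$

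The heart of the proof is to show that this equation forces $\widetilde Q_1$ to be constant and $a_2=\widetilde Q_1(0)a_1$. I would expand $\widetilde Q_1(z_2)=\sum_{p,q\ge 0}c_{pq}z_2^p\bar z_2^q$ with $c_{pq}=\overline{c_{qp}}$, write $a_1(z_2)=\beta\sum_{n\ge k}a_nz_2^n$ with $k:=\nu_0(a_1)<\infty$ and $a_k\ne 0$, and $a_2(z_2)=\sum_{n\ge 1}d_nz_2^n$; comparing coefficients of $z_2^A\bar z_2^B$ yields the recursion
$$d_A\,\delta_{B0}-\beta\sum_{m=1}^{A}c_{A-m,B}\,a_m=i\beta\,(A-B)\,c_{AB},\qquad A,B\ge 0,$$
where $\delta_{B0}$ is the Kronecker symbol. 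For each fixed column $B\ge 1$ this is a recursion in the row $A$; an induction on $A$ (the surviving terms have first index $<A$ and $\ne B$, hence vanish by the inductive hypothesis, while the term with first index $B$ requires $m\ge k$) shows that $c_{0B}=0$, that $c_{AB}=0$ whenever $A\ne B$ and $A<B+k$, and that at $A=B+k$ one gets $c_{B+k,B}=\frac{i}{k}\,c_{BB}\,a_k$. The crucial remark is that $c_{B,B+k}$ falls within the vanishing range just described for the column $B+k$ (indeed $B\ne B+k$ and $B<(B+k)+k$, with no diagonal term yet involved), so $c_{B,B+k}=0$; hence $c_{B+k,B}=\overline{c_{B,B+k}}=0$, and since $a_k\ne 0$ this gives $c_{BB}=0$ for every $B\ge 1$. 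Thus every $c_{AB}$ with $(A,B)\ne(0,0)$ vanishes, i.e. $\widetilde Q_1\equiv c_{00}=Q_1(0)$; feeding this back into the equations with $B=0$ gives $d_A=\beta Q_1(0)a_A$ for all $A$, i.e. $a_2\equiv Q_1(0)a_1$. Finally $Q_1=\widetilde Q_1(1+Q_0^2)\equiv Q_1(0)(1+Q_0^2)$, which is the assertion.

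The step I expect to be the main obstacle is the bookkeeping in this last induction: one must make sure $c_{B,B+k}$ vanishes \emph{without} circularly using $c_{BB}=0$. This goes through because the recursion couples $c_{AB}$ only to entries $c_{p,B}$ with $p<A$ lying in the \emph{same} column $B$, and the unique diagonal entry $c_{BB}$ first influences column $B$ at the row $A=B+k$, all earlier rows of that column being determined by the previously established entries $c_{0B},c_{1B},\dots$. A minor technical point is to verify that $\widetilde Q_1=Q_1/(1+Q_0^2)$ is genuinely real analytic (clear from $1+Q_0^2\ge 1$) and that the coefficient comparison is carried out on a common small disc around the origin.
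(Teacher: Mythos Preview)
Your argument is correct and follows a route that differs from the paper's. The paper splits into the cases $Q_1(0)=0$ and $Q_1(0)\ne 0$: in the first it derives a contradiction from orders of vanishing (comparing $\nu_0(a_2)$ with $\nu_0(Q_1)$ via (b) and (c)); in the second it subtracts off the expected answer, setting $R_1:=Q_1-Q_1(0)(1+Q_0^2)$ and $\tilde a_2:=a_2-Q_1(0)a_1$, verifies that analogues of (b) and (c) hold for $(R_1,\tilde a_2)$ in place of $(Q_1,a_2)$, and reduces to the first case. You instead normalise multiplicatively, $\widetilde Q_1:=Q_1/(1+Q_0^2)$, and use (a) to cancel the $Q_0$--contributions in (c), so that (b) and (c) together collapse into the single scalar identity $a_2-\widetilde Q_1\,a_1=2\,\mathrm{Re}\big(i\beta z_2(\widetilde Q_1)_{z_2}\big)$, which you then solve by an explicit coefficient recursion exploiting the Hermitian symmetry $c_{pq}=\overline{c_{qp}}$. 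Your route avoids the case distinction and produces a single clean equation; the cost is the bookkeeping in the recursion, whose crucial step (using the vanishing of $c_{B,\,B+k}$ in column $B+k$ to force $c_{BB}=0$) you have handled correctly and without circularity, since the column--$B+k$ induction up to row $B$ never touches the diagonal entry of either column. One small point worth spelling out: after $c_{BB}=0$ for all $B\ge 1$, the same induction on $A$, now unobstructed at the diagonal, immediately extends the vanishing to all $c_{AB}$ with $A\ge B+k$; reality then handles the column $B=0$ and yields $d_A=\beta Q_1(0)a_A$.
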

\begin{proof}
The proof will be divided into two following cases.
\smallskip

\noindent
{\bf Case (i).}{ \boldmath $Q_1(0)=0$.} In this case, we will show that $Q_1\equiv 0$, and thus $a_2\equiv 0$. Indeed, suppose that, contrary to our claim, $Q_1\not \equiv 0$. Then by $\mathrm{(b)}$ we get $\nu_0(a_2)=\nu_0(Q_1)+\nu_0(a_1)> \nu_0(Q_1)$, and moreover $Q_1$ cannot contain non-harmonic terms of degree $\nu_0(Q_1)$. However, it follows from $\mathrm{(c)}$ that 
$\nu_0(Q_1)=\nu_0(a_2)$, which is a contradiction. Therefore, $Q_1\equiv 0$ and $a_2\equiv 0$.

\smallskip
\noindent
{\bf Case (ii).}{ \boldmath $Q_1(0)\ne 0$.} Let $\tilde Q_1(z_2):=Q_1(z_2)-Q_1(0)$ and $\tilde a_2(z_2):=a_2(z_2)-Q_1(0)a_1(z_2)$ for all $z_2\in \Delta_{\epsilon_0}$. Then the equation $\mathrm{(c)}$ is equivalent to
\begin{equation}\label{q4}
\begin{split}
&\mathrm{Re}\Big\{i\beta z_2(\tilde Q_1)_{z_2}(z_2)+\frac{1}{2}Q_1(z_2) a_1(z_2)-\frac{3}{2i} Q_1(z_2)Q_0(z_2) a_1(z_2) \\ 
&\quad-\frac{a_2(z_2)}{2} -\frac{i}{2}Q_0(z_2) a_2(z_2)-\frac{1}{2}Q_0^2(z_2)a_2(z_2)-\frac{i}{2}Q_0^3(z_2) a_2(z_2)\Big\}\\
&= \mathrm{Re}\Big\{i\beta z_2(\tilde Q_1)_{z_2}(z_2) -\frac{1}{i} Q_1(0)Q_0(z_2) a_1(z_2)-\frac{1}{2}Q_0^2(z_2)Q_1(0)a_1(z_2)\\
&\quad-\frac{i}{2}Q_0^3(z_2)Q_1(0) a_1(z_2) +\tilde Q_1(z_2) \Big[ \frac{a_1(z_2)}{2}-\frac{3}{2i}Q_0(z_2)a_1(z_2)\Big]\\
&\quad +\tilde a_2(z_2)\Big[-\frac{1}{2}-\frac{i}{2}Q_0(z_2)-\frac{1}{2} Q_0^2(z_2)-\frac{i}{2}Q_0^3(z_2)\Big]\Big\}\equiv 0
\end{split}
\end{equation}
on $\Delta_{\epsilon_0}$. Moreover, the equation $\mathrm{(b)}$ is equivalent to
\begin{equation*}
\mathrm{Re}\Big[i \Big(1+Q_0^2(z_2)\Big)\tilde a_2(z_2)+i\Big( Q^2_0(z_2)Q_1(0)-\tilde Q_1(z_2)\Big)a_1(z_2) \Big]\equiv 0,
\end{equation*}
or equivalently
\begin{equation}\label{q5}
\mathrm{Re}\Big[i \Big(1+Q_0^2(z_2)\Big)\tilde a_2(z_2)-iR_1(z_2)a_1(z_2) \Big]\equiv 0
\end{equation}
on $\Delta_{\epsilon_0}$, where $R_1(z_2):=\tilde Q_1(z_2)-Q_0^2(z_2) Q_1(0)$, for simplicity. By $\mathrm{(a)}$ and by a simple computation, we get
$$
 \mathrm{Re}\Big\{i\beta z_2(Q_0^2(z_2))_{z_2}-Q_0(z_2)\frac{a_1(z_2)}{i}-Q_0^3(z_2)\frac{a_1(z_2)}{i}\Big\}\equiv 0
$$
on $\Delta_{\epsilon_0}$. Hence, it follows from the above equation and (\ref{q4}) that
\begin{equation}\label{q6}
\begin{split}
&\mathrm{Re}\Big\{i\beta z_2(R_1)_{z_2}(z_2)-\frac{1}{2}Q_0^2(z_2)Q_1(0)a_1(z_2)+\frac{3}{2i}Q_0^3(z_2)Q_1(0) a_1(z_2)\\
&\quad  + Q_1(0)Q_0^2(z_2) \Big[ \frac{a_1(z_2)}{2}-\frac{3}{2i}Q_0(z_2)a_1(z_2)\Big]\\
&\quad+R_1(z_2) \Big[ \frac{a_1(z_2)}{2}-\frac{3}{2i}Q_0(z_2)a_1(z_2)\Big]\\
&\quad+\tilde a_2(z_2)\Big[-\frac{1}{2}-\frac{i}{2}Q_0(z_2)-\frac{1}{2} Q_0^2(z_2)-\frac{i}{2}Q_0^3(z_2)\Big]\\
&=\mathrm{Re}\Big\{i\beta z_2(R_1)_{z_2}(z_2)+R_1(z_2) \Big[ \frac{a_1(z_2)}{2}-\frac{3}{2i}Q_0(z_2)a_1(z_2)\Big]\\
&\quad +\tilde a_2(z_2)\Big[-\frac{1}{2}-\frac{i}{2}Q_0(z_2)-\frac{1}{2} Q_0^2(z_2)-\frac{i}{2}Q_0^3(z_2)\Big]\Big\}\equiv 0
\end{split}
\end{equation}
on $\Delta_{\epsilon_0}$.

Finally, since $R_1(0)=0$, by the same argument as in Case $\mathrm{(i)}$ with $\mathrm{(b)}$ and $\mathrm{(c)}$ replaced by 
(\ref{q5}) and (\ref{q6}) respectively, we establish that $R_1\equiv 0$ and $\tilde a_2\equiv 0$. Hence, $a_2(z_2)\equiv Q_1(0) a_1(z_2)$ and $Q_1(z_2)\equiv Q_1(0)\Big(1+Q_0^2(z_2)\Big)$ on $\Delta_{\epsilon_0}$, and the proof is thus complete.
\end{proof}

\begin{lemma}\label{pde3} Let $F(z_2,t)$ be a function defined on a neighborhood $U\times I$ of $0\in \mathbb C\times \mathbb R$ with $F(z_2,0)\equiv 0$ such that $F,\frac{\partial F}{\partial t}$, and $\frac{\partial^2 F }{\partial t^2}$ are $\mathcal{C}^1$-smooth on $U\times I$ and let $\alpha \in \mathbb R$. Then 
$$
\mathrm{Re}\Big\{\Big[\big(i+\frac{\partial F}{\partial t}(z_2,t)\big)\exp\Big(\alpha\big(it-F(z_2,t)\big)\Big)-\big(i+\frac{\partial F }{\partial t}(z_2,0)\big)\Big] a(z_2) \Big\}\equiv 0 ~\text{on}~U\times I,
$$
where $a(z_2)$ is a non-zero holomorphic function on $U$ with $a(0)=0$, 
if and only if
\[
 F(z_2,t)=\begin{cases}
 -\frac{1}{\alpha}\log \Big|\frac{\cos \big(R(z_2)+\alpha t\big)}{\cos (R(z_2))} \Big| &~\text{if}~ \alpha\ne 0\\
 \tan(R(z_2))t  &~\text{if}~ \alpha = 0
\end{cases} 
 \]
 for all $(z_2,t)\in U\times I$, where $R$ is a $\mathcal{C}^1$-smooth function on $U$.

\end{lemma}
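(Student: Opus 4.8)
The idea is to fix $z_2\in U$ and read the identity as a first-order ODE in $t$ for the real-valued function $\phi(t):=f(z_2,t)$, which is $\mathcal{C}^1$ (indeed $\mathcal{C}^2$) in $t$ and satisfies $\phi(0)=0$; the asserted formula will then be the explicit solution, with $z_2$ entering only through the constant of integration. Throughout one shrinks $I$, and if necessary $U$, so that $\cos R$ is nonvanishing and $R(z_2)+\alpha t$ stays away from the poles of $\tan$.

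For the sufficiency one simply substitutes the proposed formula. When $\alpha\ne0$, differentiating $f=-\frac{1}{\alpha}\log\bigl|\tfrac{\cos(R+\alpha t)}{\cos R}\bigr|$ in $t$ gives $f_t=\tan(R+\alpha t)$, while $\exp\bigl(\alpha(it-f)\bigr)=e^{i\alpha t}\,\tfrac{\cos(R+\alpha t)}{\cos R}$ after removing the absolute value (the sign of the cosine is locally constant in $t$). Using the elementary identity $i\cos\theta+\sin\theta=ie^{-i\theta}$, the product $(i+f_t)\exp(\alpha(it-f))$ collapses to $\tfrac{ie^{-iR}}{\cos R}=i+\tan R=i+f_t(z_2,0)$. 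When $\alpha=0$ it is immediate: then $f=\tan(R(z_2))t$ has $f_t\equiv\tan(R(z_2))$, independent of $t$, and $\exp(0)=1$.

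For the necessity, observe that $e^{-\alpha\phi(t)}>0$ and $|i+\phi'(t)|>0$, so the identity splits into its modulus and argument parts. Writing $\theta(t):=\arg\bigl(i+\phi'(t)\bigr)$, which lies in $(0,\pi)$ since $\mathrm{Im}(i+\phi'(t))=1$, the argument part reads $\theta(t)+\alpha t\equiv\theta(0)\pmod{2\pi}$; continuity in $t$ and equality at $t=0$ force the integer to be $0$ on a small enough interval, so $\theta(t)=\theta(0)-\alpha t$. Comparing real and imaginary parts of $i+\phi'(t)=|i+\phi'(t)|e^{i\theta(t)}$ then yields $\phi'(t)=\cot\bigl(\theta(0)-\alpha t\bigr)=\tan\bigl(\tfrac{\pi}{2}-\theta(0)+\alpha t\bigr)$. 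Put $R(z_2):=\tfrac{\pi}{2}-\theta(0)=\arctan\bigl(f_t(z_2,0)\bigr)$ (principal branch); this is $\mathcal{C}^1$ on $U$ because $z_2\mapsto f_t(z_2,0)$ is. Hence $\phi'(t)=\tan(R(z_2)+\alpha t)$, and integrating from $0$ to $t$ with $\phi(0)=0$ gives precisely the two claimed formulas, the primitive of $\tan$ being $-\log|\cos|$ when $\alpha\ne0$ and $\phi$ being linear when $\alpha=0$. A direct check shows this $\phi$ automatically satisfies the modulus part as well, so no information has been lost.

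I expect the only delicate point to be the argument bookkeeping in the necessity direction: selecting the continuous branch of $\arg$, shrinking $I$ so that $\theta(t)+\alpha t$ does not wind past $2\pi$ and so that $R(z_2)+\alpha t$ avoids $\tfrac{\pi}{2}+\pi\mathbb{Z}$, and noting that the absolute values in the statement render the final formula insensitive to the sign of $\cos(R(z_2)+\alpha t)$. Everything else is routine one-variable calculus performed with $z_2$ held fixed as a parameter.
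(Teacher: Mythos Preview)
Your argument is correct, and it lands on the same first-order relation $f_t(z_2,t)=\tan\bigl(R(z_2)+\alpha t\bigr)$ that the paper obtains, but you arrive there by a different mechanism. The paper observes that the identity says the left-hand side is constant in $t$, differentiates it in $t$, and computes
\[
\frac{\partial}{\partial t}\Big[(i+f_t)\,e^{\alpha(it-f)}\Big]
=e^{\alpha(it-f)}\big[f_{tt}-\alpha(1+f_t^2)\big],
\]
so the identity is equivalent to the second-order ODE $f_{tt}=\alpha(1+f_t^2)$; separating variables then gives $f_t=\tan(R(z_2)+\alpha t)$, and integrating with $f(z_2,0)=0$ yields the stated formulas. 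You instead split the complex identity into modulus and argument, and the argument equation $\theta(t)+\alpha t=\theta(0)$ gives $f_t=\cot(\theta(0)-\alpha t)=\tan(R+\alpha t)$ directly, skipping the second-order step entirely. Your route is a bit more direct and makes the role of $R(z_2)=\arctan f_t(z_2,0)$ explicit; the paper's route is slightly cleaner in that it never needs to track a branch of $\arg$ or justify that the $2\pi$-ambiguity is trivial. Either way, the final integration is the same.
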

\begin{proof}
It is not hard to check that
$$
\mathrm{Re}\Big\{\Big[\frac{\partial}{\partial t}\Big(\big(i+\frac{\partial F }{\partial t}(z_2,t)\big)\exp\big(\alpha(it-F(z_2,t))\big)\Big)\Big] a(z_2)\Big\}\equiv 0~\text{on}~U\times I 
$$
if and only if
\begin{equation}\label{pt1}
\begin{split}
\mathrm{Re}\Big\{\Big[\frac{\partial^2 F}{\partial t^2}(z_2,t)-\alpha \Big(1+\big(\frac{\partial F}{\partial t}(z_2,t)\big)^2\Big)\Big]a(z_2)\Big\}\equiv 0~\text{on}~ U\times I.
\end{split}
\end{equation}
On the other hand, we have 
\begin{equation*}
\begin{split}
&\mathrm{Re}\Big\{\Big[\frac{\partial^2 F}{\partial t^2}(z_2,t)-\alpha \Big(1+\big(\frac{\partial F}{\partial t}(z_2,t)\big)^2\Big)\Big]a(z_2)\Big\}\\
&\equiv \Big[\frac{\partial^2 F}{\partial t^2}(z_2,t)-\alpha \Big(1+\big(\frac{\partial F}{\partial t}(z_2,t)\big)^2\Big)\Big]\mathrm{Re}\big(a(z_2)\big).
\end{split}
\end{equation*}
on $U\times I$. Since $\mathrm{Re}\big(a(z_2)\big)\not \equiv 0$ on $U$, Eq. (\ref{pt1}) is equivalent to
$$
\frac{\partial^2 F}{\partial t^2}(z_2,t)\equiv \alpha \Big(1+\big(\frac{\partial F}{\partial t}(z_2,t)\big)^2\Big)~\text{on}~U\times I.
$$
Moreover, it follows from the above equation that
$$
\frac{\partial F}{\partial t}(z_2,t)=\tan(R(z_2)+\alpha t)
$$
for all $(z_2,t)\in U\times I$. Hence, the function $F$ has the form as in the lemma.
\end{proof}
\begin{corollary}\label{pde4} Let $\epsilon_0,\beta,\alpha \in \mathbb R$ with $\beta\ne 0 $ and $\epsilon_0>0$. Suppose that $R:\Delta_{\epsilon_0}\to [-1,1]$ is $\mathcal{C}^1$-smooth  satisfying 
$$
2\mathrm{Re}\Big(i\beta z_2 R_{z_2}(z_2)\Big)=- \mathrm{Re}\big(ia_1(z_2)\big)
$$
 for all $z_2\in \Delta_{\epsilon_0}$, where $a_1(z_2)$ is a non-zero holomorphic function defined on $\Delta_{\epsilon_0}$. Let $F(z_2,t): \Delta_{\epsilon_0}\times (-\delta_0,\delta_0)\to \mathbb R $ be a function defined by
 \[
 F(z_2,t)=\begin{cases}
 -\frac{1}{\alpha}\log \Big|\frac{\cos \big(R(z_2)+\alpha t\big)}{\cos (R(z_2))} \Big| &~\text{if}~ \alpha\ne 0\\
 \tan(R(z_2))t  &~\text{if}~ \alpha = 0,
\end{cases} 
 \]
 where $\delta_0=\frac{1}{2|\alpha|}$ if $\alpha\ne 0$ and $\delta_0=+\infty$ if otherwise. Then we have
  \begin{equation}\label{3042013}
 \mathrm{Re}\Big[2i\alpha\beta  z_2 F_{z_2}(z_2,t)+\Big(F_t(z_2,t)-\tan\big(R(z_2)\big)\Big) ia_1(z_2)\Big]= 0
\end{equation}
 for all $(z_2,t)\in \Delta_{\epsilon_0}\times (-\delta_0,\delta_0)$.
\end{corollary}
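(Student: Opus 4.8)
The plan is to verify the identity $(\ref{3042013})$ by a direct computation starting from the explicit formula for $f$, reducing the whole statement to the defining relation imposed on $R$.

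First I would treat the case $\alpha\ne 0$. Since $R$ takes values in $[-1,1]$ and $|\alpha t|<\alpha\delta_0=\tfrac12$, both $R(z_2)$ and $R(z_2)+\alpha t$ lie in $(-\tfrac32,\tfrac32)\subset(-\tfrac\pi2,\tfrac\pi2)$, so $\cos(R(z_2))>0$ and $\cos(R(z_2)+\alpha t)>0$ throughout $\Delta_{\epsilon_0}\times(-\delta_0,\delta_0)$. Hence the absolute values in the definition of $f$ may be dropped, and from
\[
f(z_2,t)=-\frac1\alpha\log\cos\bigl(R(z_2)+\alpha t\bigr)+\frac1\alpha\log\cos\bigl(R(z_2)\bigr)
\]
together with the chain rule $\dfrac{\partial}{\partial z_2}\log\cos\bigl(R(z_2)+\alpha t\bigr)=-\tan\bigl(R(z_2)+\alpha t\bigr)R_{z_2}(z_2)$ (and its analogue at $t=0$) one obtains
\[
f_t(z_2,t)=\tan\bigl(R(z_2)+\alpha t\bigr),\qquad f_{z_2}(z_2,t)=\frac1\alpha\Bigl(\tan\bigl(R(z_2)+\alpha t\bigr)-\tan\bigl(R(z_2)\bigr)\Bigr)R_{z_2}(z_2).
\]

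Next I would substitute these into the bracket of $(\ref{3042013})$: the prefactor $\tfrac1\alpha$ in $f_{z_2}$ cancels the $\alpha$ in $2i\alpha\beta z_2$, and, noting that $f_t(z_2,0)=\tan(R(z_2))$, the \emph{real} scalar $\lambda(z_2,t):=\tan\bigl(R(z_2)+\alpha t\bigr)-\tan\bigl(R(z_2)\bigr)$ becomes a common factor of both terms, so that the quantity inside $\mathrm{Re}[\cdot]$ equals $\lambda(z_2,t)\bigl(2i\beta z_2R_{z_2}(z_2)+ia_1(z_2)\bigr)$. Since $\lambda(z_2,t)\in\mathbb R$, taking the real part yields $\lambda(z_2,t)\,\mathrm{Re}\bigl[2i\beta z_2R_{z_2}(z_2)+ia_1(z_2)\bigr]$, which vanishes by the hypothesis $2\mathrm{Re}\bigl(i\beta z_2R_{z_2}(z_2)\bigr)=-\mathrm{Re}\bigl(ia_1(z_2)\bigr)$. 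This settles the case $\alpha\ne 0$.

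Finally, when $\alpha=0$ one has $f(z_2,t)=\tan(R(z_2))t$, so $f_t(z_2,t)=\tan(R(z_2))$ and the second term of $(\ref{3042013})$ vanishes identically, while the first term carries the factor $\alpha=0$; thus the identity is trivial. There is no deep obstacle here; the step most prone to a slip is the removal of the absolute values in the definition of $f$, which is exactly why the a priori bounds $|R|\le 1$ and $\delta_0=\tfrac1{2|\alpha|}$ are imposed, and once that is in place the whole statement collapses to the defining equation for $R$.
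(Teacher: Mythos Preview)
Your proof is correct and follows essentially the same approach as the paper: compute $f_t$ and $f_{z_2}$ explicitly in each case and then reduce the identity to the defining relation for $R$. Your version is in fact slightly more careful, since you justify why the absolute values may be removed before differentiating (the paper tacitly uses this), and your factoring of the real scalar $\lambda(z_2,t)$ makes the final step a touch cleaner than the paper's parallel computation.
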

\begin{proof}
By a direct computation we obtain that
\begin{equation*}
\begin{split}
F_{z_2}(z_2,t)= 
\begin{cases}
\frac{1}{\alpha}\Big( \tan \big(R(z_2)+\alpha t\big)-\tan \big(R(z_2)\big)\Big) R_{z_2}(z_2)&~\text{if}~\alpha\ne 0\\
\Big(1+\tan^2\big(R(z_2)\big)\Big)R_{z_2}(z_2)t&~\text{if}~\alpha= 0
\end{cases}
\end{split}
\end{equation*}
and
\begin{equation*}
\begin{split}
F_{t}(z_2,t)= 
\begin{cases}
 \tan \big(R(z_2)+\alpha t\big) &~\text{if}~\alpha\ne 0\\
\tan(R(z_2)) &~\text{if}~\alpha= 0
\end{cases}
\end{split}
\end{equation*}
for all $(z_2,t)\in \Delta_{\epsilon_0}\times (-\delta_0,\delta_0)$.

If $\alpha=0$, then (\ref{3042013}) is trivial. So, we only consider the case $\alpha\ne 0$. By our assumption, we thus obtain that 
\begin{equation*}
\begin{split}
 \mathrm{Re}\Big[2i\alpha\beta  z_2 F_{z_2}(z_2,t)\Big]&=\Big( \tan \big(R(z_2)+\alpha t\big)-\tan \big(R(z_2)\big)\Big) \mathrm{Re}\big(-ia_1(z_2)\big)\\
 &=\Big(F_t(z_2,t)-\tan\big(R(z_2)\big)\Big)\mathrm{Re}\big(- ia_1(z_2)\big)
\end{split}
\end{equation*}
for all  $(z_2,t)\in \Delta_{\epsilon_0}\times (-\delta_0,\delta_0)$. Therefore, Eq. (\ref{3042013}) holds, and thus which ends the proof.
\end{proof}

\subsection*{A.3. Several technical lemmas}
In what follows $P$ stands for a real $\mathcal{C}^\infty$-smooth function defined on the disc $\Delta_{\epsilon_0}:=\{z\in \mathbb C\colon |z|< \epsilon_0\}$ with sufficiently small radius $\epsilon_0>0$ satisfying $P(0)=0$, $P(z)>0$ for any $z\in \Delta_{\epsilon_0}^*:=\Delta_{\epsilon_0}\setminus\{0\}$, and $\nu_0(P)=+\infty$. 

First of all, we recall the following lemma which is a slight generalization of \cite[Lemma 3]{Kim-Ninh} and it is proved in \cite{NCM} .
\begin{lemma}[see Lemma $1$ in \cite{NCM} or Lemma $3$ in \cite{Kim-Ninh}]\label{Al3}
If $a, b$ are complex numbers and if $g_0, g_1, g_2$ are $\mathcal{C}^\infty$-smooth functions defined on the disc $\Delta_{\epsilon_0}$ satisfying:
\begin{itemize}
\item[(A1)] $g_0(z) = O(|z|)$, $g_1(z) = O(|z|^\ell)$, and $g_2(z) = o(|z|^m)$, and
\item[(A2)] $\mathrm{Re} \Big[a z^m+\frac{1}{P^n(z)}\Big(b z^\ell\big(1+g_0(z)\big) \frac{P'(z)}{P(z)}
+g_1(z) \Big)\Big]= g_2(z)$ for every $z \in \Delta^*_{\epsilon_0}$
\end{itemize}
for any nonnegative integers $\ell, m$ and $n$, except for the following two cases
\begin{itemize}
\item[(E1)] $\ell=1$ and $\mathrm{Re}(b) = 0$, and
\item[(E2)] $m=0$ and $\mathrm{Re}(a) = 0$,
\end{itemize}
then $ab=0$.
\end{lemma}

Following the proof of Lemma $1$ in \cite{NCM}, we have the following corollary.
\begin{corollary}\label{cor2}
If $b$ is a complex number and if $g_0, g_1, g_2$ are $\mathcal{C}^\infty$-smooth functions defined on the 
disc $\Delta_{\epsilon_0}$ with sufficiently small radius satisfying:
\begin{itemize}
\item[(A1)] $g_0(z) = O(|z|)$, $g_1(z) = O(|z|^\ell)$, and $\nu_0(g_2)=m$ or $\nu_0(g_2)=+\infty$, and
\item[(A2)] $\mathrm{Re} \Big(b z^\ell\big(1+g_0(z)\big) \frac{P'(z)}{P(z)}
+g_1(z) \Big)= g_2(z)$ for every $z \in \Delta^*_{\epsilon_0}$
\end{itemize}
for some nonnegative integer $\ell$ and for some positive integer $m$, except for the following two cases
\begin{itemize}
\item[(E1)] $\ell=1$ and $\mathrm{Re}(b) = 0$, and
\item[(E2)] $\ell\geq 2$ and $0<m/(\ell-1)<1$,
\end{itemize}
then $b=0$.
\end{corollary}

Let $F$ be a $\mathcal{C}^1$-smooth complex-valued function defined in a neighborhood $U$ of the origin in the complex plane.  Consider the autonomous dynamical system 
\begin{equation}\label{eq??1}
\frac{dz}{dt}=F(z), z(0)=z_0\in U.
\end{equation}
Let us now recall several definitions.
\begin{define} A state $\hat z\in U$ is called an equilibrium of (\ref{eq??1}) if $F(\hat z)=0$.
\end{define}

\begin{define} An equilibrium, $\hat z$, of (\ref{eq??1}) is called locally asymptotically stable if for all $\epsilon>0$ there exists $\delta>0$ such that $|z_0-\hat z|<\delta$ implies that $|z(t)-\hat z|<\epsilon$ for all $t\geq 0$ and $\lim_{t\to +\infty} z(t)=0$. 
\end{define}
\begin{lemma}\label{l4} Let $b \in \mathbb C$ with $\mathrm{Re}(b)< 0$ and let $f: \Delta_{\epsilon}\to \mathbb R^*$ be a nonnegative $\mathcal{C}^1$-smooth function satisfyfing that $f(0)=0$ and $f(z)>0$ for all $z\in \Delta^*_{\epsilon}$. Then the origin is a locally asymptotically stable equilibrium of the following equation
\begin{equation}\label{eq?3}
\frac{dz}{dt}=z\Big[i\alpha+b \big(1+g(z)\big)f(z)\Big],
\end{equation}
where $\alpha\in \mathbb R^*$ and $g$ is a $\mathcal{C}^1$-smooth function defined on $\Delta_{\epsilon}$ satisfying $g(0)=0$.
\end{lemma}

\begin{proof} First of all, denote by $F(z):=z\Big[i\alpha+b \big(1+g(z)f(z)\big)\Big]$ for all $z\in \Delta_{\epsilon_0}$. Let $V(z):=\frac{1}{2}|z|^2$ for all $z\in \mathbb C$. Then it is easily checked that
\begin{itemize}
\item[(i)] $V(0)=0$,
\item[(ii)] $V(z)>0$ for all $z\ne 0$.
\end{itemize}
Moreover, by assumption there exists a neighborhood $U\subset \Delta_{\epsilon_0}$ of $0$ such that we have
\begin{equation}
 \nabla V(z). F(z)=\mathrm{Re}\big(F(z)\bar z\big)=|z|^2\mathrm{Re}\Big(b(1+g(z))f(z)\Big)<0
\end{equation}
for all $z\in U\setminus\{0\}$. Theorefore, $V$ is a strong Lyapunov function and hence by the Lyapunov's stability theorem the origin is locally asymptotically stable (cf. \cite[Theorem 10.7]{JS}).
\end{proof}
\begin{lemma}\label{Al5}
 Let $Q(z_2,t)$ be a $\mathcal{C}^\infty$-smooth funtion defined on a neighborhood of $(0,0)$ in $\mathbb C\times \mathbb R$ satisfying that $Q_{z_2}(z_2,0)$ is real-analytic and let $h_2$ be a non-zero holomorphic function defined on a neighborhood of $(0,0)$ in $\mathbb C^2$. If
\begin{equation}\label{eq?4}
\mathrm{Re}\Big[\Big(P'(z_2)+tQ_{z_2}(z_2,t)\Big)h_2\big(it-P(z_2)-tQ(z_2,t),z_2\big)\Big]=0
\end{equation}
for all $(z_2,t)$ in a neighborhood of $(0,0)$ in $ \mathbb C\times\mathbb R$, then, after a change of variable in $z_2$, $h_2(z_1,z_2)\equiv i z_2$ and $P,Q$ are rotational in $z_2$, i.e., $P(z_2)=P(|z_2|)$ and $Q(z_2,t)=Q(|z_2|,t)$.
\end{lemma}
\begin{proof}
Expand $h_2$ into the Taylor series at the origin so that
$$
h_2(z_1,z_2)=\sum_{n=0}^\infty a_n(z_2)z_1^n,
$$
where $a_n$ is holormorphic in a neighborhood of $0$ in $\mathbb C$ for all $n\in \mathbb N$. Then (\ref{eq?4}) is equivalent to 
\begin{equation}\label{eq?5}
\mathrm{Re}\Big[\Big(P'(z_2)+tQ_{z_2}(z_2,t)\Big)\sum_{n=0}^\infty \big(it-P(z_2)-tQ(z_2,t)\big)^n a_n(z_2)\Big]=0
\end{equation}
for all $(z_2,t)\in \Delta_{\epsilon_0}\times(-\delta_0,\delta_0)$, where $\epsilon_0>0$ and $\delta_0>0$ are small enough.

Since $h_2\not \equiv 0$, there is the smallest $n_0$ such that $a_{n_0}\not\equiv 0$ and thus it can be written as follows:
$$ 
a_{n_0}(z_2)=a_{n_0 m_0} z_2^{m_0}+o(z_2^{m_0}),
$$
where $m_0=\nu_0(a_{n_0})$ and $a_{n_0 m_0}\in \mathbb C^*$. Moreover, since $P(z_2)=o(|z_2|^{m_0})$ and $Q(0,0)=0$, it follows from (\ref{eq?5}) with $t=\alpha P(z_2)$ ($\alpha\in \mathbb R$ will be chosen later) that
$$
\mathrm{Re}\Big[\big(i\alpha-1 \big)^{n_0} \big(a_{n_0 m_0}z_2^{m_0}+o(|z_2|^{m_0})\big)\frac{P'(z_2)}{P(z_2)}\Big]=g(z_2)
$$
for every $z_2\in\Delta^*_{\epsilon_0}$, where $g$ is the function defined on $\Delta_{\epsilon_0}$ by setting $g(z_2)=-\mathrm{Re}\Big[\alpha \big(i\alpha-1 \big)^{n_0} Q_{z_2}(z_2,\alpha P(z_2))\big(a_{n_0 m_0}z_2^{m_0}+o(|z_2|^{m_0}\big)\Big ]$.

Notice that if $n_0>0$ then we can choose $\alpha$ so that $\mathrm{Re}\Big[ a\big(i\alpha-1 \big)^{n_0}\Big]\ne 0$. Therefore, since $\nu_0(g)\geq m_0$ it follows from Corollary \ref{cor2} that $n_0=0, m_0=1$, and $\mathrm{Re}(a_{n_0 m_0})=0$. By a change of variable in $z_2$ (cf. Lemma \ref{lemma6}), we can assume that $a_0(z_2)\equiv iz_2$. 

Next, we shall prove that $a_k\equiv 0$ for every $k=1,2,\ldots$. Indeed, suppose otherwise. Then let $k_0>0$ be the smallest integer such that $a_{k_0}\not \equiv  0$. Thus it can be written as follows:
$$ 
a_{k_0}(z_2)=a_{k_0 m_0} z_2^{m_0}+o(z_2^{m_0})
$$
where $m_0=\nu_0(a_{k_0})$ and $a_{k_0 m_0}\in \mathbb C^*$. Taking $\frac{\partial}{\partial t}$ at $t=0$ of both sides of the equation (\ref{eq?5}) and notice that $P(z_2)=o(|z_2|^{m_0})$, one obtains that
\begin{equation}\label{eq?6}
\begin{split}
& \mathrm{Re}\Big[ik_0\big(-P(z_2)\big)^{k_0-1}\big(a_{k_0 m_0} z_2^{m_0}+o(|z_2|^{m_0})\big) P'(z_2)\\
&+ Q_{z_2}(z_2,0)\Big(iz_2+\big(-P(z_2)\big)^{k_0}\big(a_{k_0 m_0} z_2^{m_0}+o(|z_2|^{m_0})\big)\Big)\Big]=0
\end{split}
\end{equation}
for all $z_2\in  \Delta_{\epsilon_0}$. 

Since $Q_{z_2}(z_2,0)$ is real-analytic and $\nu_0(P)=\nu_0(P')=+\infty$, $ \mathrm{Re}\Big[Q_{z_2}(z_2,0)\Big]\equiv 0$ on $\Delta_{\epsilon_0}$ and hence $Q(z_2,0)$ is rotational (cf. \cite[Lemma 4]{Kim-Ninh}). Therefore, we arrive at 
\begin{equation}\label{eq?7}
\begin{split}
& \mathrm{Re}\Big[ik_0\big(a_{k_0 m_0} z_2^{m_0}+o(|z_2|^{m_0})\big) \frac{P'(z_2)}{P(z_2)}\\
&- Q_{z_2}(z_2,0)\Big(a_{k_0 m_0} z_2^{m_0}+o(|z_2|^{m_0})\Big)\Big]=0
\end{split}
\end{equation}
for all $z_2\in  \Delta_{\epsilon_0}$. Following the argument as above, by Corollary \ref{cor2} we conclude that
$a_{k_0}(z_2)\equiv \beta z_2(1+O(z_2))$, where $\beta\in \mathbb R^*$. Without loss of generality we may assume that $\beta<0$. Thus, since $\nu_0(P)=+\infty$, inserting $t=0$ into (\ref{eq?5}) one has 
\begin{equation}\label{eq?6}
\mathrm{Re}\Big[ z_2\Big( i +\beta \big(1+O(|z_2|)\big)\Big)P'(z_2)\Big]\equiv 0
\end{equation}
on $\Delta_{\epsilon_0}$. So, Lemma \ref{l4} tells us that, with no loss of generality, 
there exists a flow $\gamma: [t_0,+\infty)\to \Delta^*_{\epsilon_0}~(t_0>0)$ of the following equation
$$
 \frac{dz_2}{dt}=z_2\Big( i +\beta\big(1+O(|z_2|)\big)\Big)
$$
satisfying $\gamma(t)\to 0$ as $t\to +\infty$. Hence, by (\ref{eq?6}) one gets
$\frac{d P(\gamma(t))}{dt}\equiv 0$, and therefore $P(\gamma(t))\equiv \lim_{t\to +\infty} P(\gamma(t))=P(0)=0$, which is absurd.
This proves that $h_2(z_1,z_2)\equiv iz_2$. 

Consequently, (\ref{eq?4}) is now equivalent to
\begin{equation}\label{eq?17}
\mathrm{Re}\Big[iz_2\Big(P'(z_2)+tQ_{z_2}(z_2,t)\Big)\Big]=0
\end{equation}
for all $(z_2,t)\in \Delta_{\epsilon_0}\times (-\delta_0,\delta_0)$. This implies that $\mathrm{Re}\Big[iz_2 P'(z_2)\Big]\equiv 0$ on $\Delta_{\epsilon_0}$, and thus it follows from \cite[Lemma 4]{Kim-Ninh} that $P$ is rotational. Furthermore, one has by (\ref{eq?17})
$$
\mathrm{Re}\Big[iz_2Q_{z_2}(z_2,t)\Big]=0
$$
for all $(z_2,t)\in \mathbb C\times (-\delta_0,\delta_0)$. Again by \cite[Lemma 4]{Kim-Ninh}, this in turn yields $Q(z_2,t)$ is rotational in $z_2$. This ends the proof.
\end{proof}

\begin{Acknowlegement} The author would like to thank Prof. Kang-Tae Kim, Prof. Do Duc Thai, and Dr. Hyeseon Kim for their precious discussions on this material. It is a pleasure to thank Prof. Nguyen Quang Dieu for his helpful suggestions. 
\end{Acknowlegement}


\begin{thebibliography}{99}
\bibitem{B-P1}
E. Bedford, S. Pinchuk, Domains in $\mathbb C^2$ with noncompact groups of
automorphisms, Math. USSR Sbornik 63 (1989), 141--151.

\bibitem{B-P2}
E. Bedford, S. Pinchuk, Domains in $\mathbb C^{n+1}$ with noncompact automorphism
group, J. Geom. Anal. 1 (1991), 165--191.

\bibitem{B-P3}
E. Bedford, S. Pinchuk, Domains in $\mathbb C^2$
with noncompact automorphism groups, Indiana Univ. Math. J. 47 (1998), 199-222.
\bibitem{Bao} M. S. Baouendi, P. Ebenfelt, L. P. Rothschild, Real submanifolds in complex space and their mappings, Princeton Mathematical Series, 47. Princeton University Press, Princeton, NJ, 1999.


\bibitem{Bell-Lig} S. Bell, E. Ligocka, A simplification and extension of
Fefferman's theorem on biholomorphic mappings, Invent. Math. 57 (3) (1980), 283--289.

\bibitem{Ber} F. Berteloot, Characterization of models in $\mathbb C^2$
by their automorphism groups, Internat. J. Math. 5 (1994), 619--634.


\bibitem{By1} J. Byun, J.-C. Joo, M. Song, The characterization
of holomorphic vector fields vanishing at an infinite type point, J. Math. Anal. Appl. 387 (2012), 667--675.
\bibitem{Co} C. Coleman, Equivalence of planar dynamical and differential systems, J. Differential Equations 1 (1965), 222--233.
\bibitem{D} J. P. D'Angelo, Real hypersurfaces, orders of contact, and applications, Ann. Math. 115 (1982), 615--637.

\bibitem{Fef} C. Fefferman, The Bergman kernel and biholomorphic mappings
of pseudoconvex domains, Invent. Math. 26 (1974), 1--65.


\bibitem{GGJ} A. Garijo, A. Gasull, X. Jarque, Local and global phase portrait of equation $\dot z=f(z)$, Discrete Contin. Dyn. Syst. 17 (2)( 2007), 309--329.

\bibitem{GK} R. Greene, S. G. Krantz, Techniques for studying
automorphisms of weakly pseudoconvex domains, Math. Notes,
Vol 38, Princeton Univ. Press, Princeton, NJ, 1993, 389--410.



\bibitem{IK} A. Isaev, S. G. Krantz, Domains with non-compact automorphism group:
A survey, Adv. Math. 146 (1999), 1--38.
\bibitem{JS} D. W. Jordan, P. Smith, Nonlinear ordinary differential equations. An introduction for scientists and engineers,  Fourth edition, Oxford University Press, Oxford, 2007.









\bibitem{Kim-Ninh} K.-T. Kim, V. T. Ninh, On the tangential holomorphic vector fields vanishing at an infinite type point, http://arxiv.org/abs/1206.4132, Trans. Amer. Math. Soc., to appear.



\bibitem{NCM} V. T. Ninh, V. T. Chu, A. D. Mai, On the real-analytic infinitesimal CR automorphism of hypersurfaces of infinite type, http://arxiv.org/abs/1404.4914.
\bibitem{Ninh1} V. T. Ninh, On the CR automorphism group of a certain hypersurface of infinite type in $\mathbb C^2$, http://arxiv.org/abs/1311.3050.

\bibitem{R} J.-P. Rosay, Sur une caracterisation de la boule parmi les domaines
de $\mathbb C^n$ par son groupe d'automorphismes, Ann. Inst. Fourier 29 (4) (1979), 91--97.


\bibitem{S} R. Sverdlove, Vector fields defined by complex functions, J. Differential Equations 34 (3) (1979), 427--439. 

\bibitem{W} B. Wong, Characterization of the ball in $\mathbb C^n$ by its automorphism group, Invent. Math. 41 (1977), 253--257.
\end{thebibliography}
\end{document}